\newcounter{a}
\else\stepcounter{a}\fi
\newcommand{\cchi}{\mbox{\raise.48ex\hbox{\,$\chi$}}}
\newcommand{\R}{\mathbb{R}}
\newcommand{\C}{\mathbb{C}}
\newcommand{\N}{\mathbb N}
\newcommand{\Z}{\mathbb Z}
\newcommand{\YT}[3]{
	\vcenter{\hbox{
			\begin{tikzpicture}[x={(0in,-#1)},y={(#1,0in)}] 
			\foreach \rowi [count=\i] in {#3} {
				\foreach \e [count=\j] in \rowi {
					\draw (\i,\j) rectangle +(-1,-1);
					\draw (\i-0.5,\j-0.5) node {$#2\e$};
				}
			}
			\end{tikzpicture}
		}}
	}
\newcommand{\SYT}[3]{
	\vcenter{\hbox{
			\begin{tikzpicture}[x={(0in,-#1)},y={(#1,0in)}] 
			\foreach \rowi [count=\i] in {#3} {
				\foreach \e [count=\j] in \rowi {
					\draw (\i,-\j) rectangle +(-1,-1);
					\draw (\i-0.5,-\j-0.5) node {$#2\e$};
				}
			}
			\end{tikzpicture}
		}}
	}
\newtheorem*{question*}{Question}
\newtheorem{theorem}{Theorem}
\newtheorem{corollary}[theorem]{Corollary}
\newtheorem{lemma}[theorem]{Lemma}
\newtheorem{proposition}[theorem]{Proposition}
\newtheorem*{remark}{Remark}
\newtheorem{example}[theorem]{Example}
\theoremstyle{definition}
\newtheorem*{algorithm}{Algorithm}
\newtheorem{definition}[theorem]{Definition}
\title{Symplectic keys and Demazure atoms in type $C$}
\author[J. M. Santos]{Jo\~{a}o Miguel Santos
\thanks{\href{mailto:jmsantos@mat.uc.pt}{jmsantos@mat.uc.pt}. This work was partially supported by the Center for Mathematics of the  University of Coimbra - UID/MAT/00324/2019, funded by the Portuguese Government through FCT/MEC and co-funded by the European Regional Development Fund through the Partnership Agreement PT2020. It was also supported by FCT, through the grant PD/BD/142954/2018, under POCH funds, co-financed by the European Social Fund and Portuguese National Funds from MEC.}}
\address{CMUC, Department of Mathematics, University of Coimbra, Apartado 3008,
	3001--454 Coimbra, Portugal}
\keywords{Keys, Demazure crystal graph, Demazure characters and  atoms in type C.}
\abstract{We compute, mimicking the Lascoux-Schützenberger type $A$ combinatorial  procedure, left and right keys for a Kashiwara-Nakashima tableau in type $C$. These symplectic keys have a role similar to the keys for semistandard Young tableaux. More precisely, our symplectic keys give a tableau criterion for the Bruhat order on the hyperoctahedral group and cosets, and describe Demazure atoms and characters in type $C$. The right and the left symplectic keys are related through the Lusztig involution. A type $C$ Schützenberger evacuation is defined to realize that involution.}
\begin{document}
\maketitle
\section{Introduction}
To generate the characters of a given finite dimensional irreducible representation of the symplectic Lie algebra $sp(2n, \C)$, two different types of symplectic tableaux have been proposed: the King tableaux \cite{King 75} and the De Concini tableaux \cite{DeC 79}.
We work with symplectic Kashiwara and Nakashima tableaux, 
 which are a variation of  De Concini tableaux, and with its crystal structure. That crystal structure allows a plactic monoid compatible with insertion and sliding algorithms, and Robinson-Schensted type correspondence, studied by Lecouvey in terms of crystal isomorphisms \cite{Lec 02}. 

Kashiwara \cite{Kash 93} and Littelmann \cite{ Litt 95} have shown that Demazure characters \cite{Dem 74}, for any Weyl group, can be lifted to certain subsets of Kashiwara-Nakashima tableaux, called Demazure crystals. Demazure characters (key polynomials) are then generated over Demazure crystals. In type $C_n$, they are non symmetric Laurent polynomials, with respect to the action of the Weyl group, which can be seen as "partial" symplectic characters. 
Given a partition $\lambda$, let $v$ be in the orbit of $\lambda$ under the action of the Weyl group, the Demazure crystal, $\mathfrak{B}_v$, is a union of disjoint sets, \emph{Demazure crystal atoms}, $\widehat{\mathfrak{B}}_u$,  over an interval in the Bruhat order, on the cosets modulo the stabilizer of $\lambda$. This order, induced on the orbit of $\lambda$, gives $\mathfrak{B}_v=\displaystyle \biguplus_{\lambda\le u\le v} \widehat{\mathfrak{B}}_u$.

In type $A_{n-1}$, Lascoux and Sch\"{u}tzenberger \cite{LasSchu 90} identified tableaux with nested columns as key tableaux, and defined the right key map that sends tableaux to key tableaux. Their right key map can be used to describe the type $A$ Demazure atoms $\widehat{\mathfrak{B}}_u,\,u\in \N^n$ \cite[Theorem 3.8]{LasSchu 90}.
Azenhas, in a presentation in {\em The 69th Séminaire Lotharingien de Combinatoire} \cite{AzM 12}, identified some type $C$ Kashiwara-Nakashima tableaux as key tableaux, but does not give a construction of the right key map. Motivated by Azenhas \cite{AzM 12} and inspired by Lascoux and Schützenberger \cite{LasSchu 90}, we give a construction of left and right keys of a type $C$ Kashiwara-Nakashima tableau. Our construction, based on type $C$ frank words, introduced in Section \ref{SecKeys}, and Sheats \emph{jeu de taquin}, allows us to prove Theorem \ref{DemKey}, a type $C$ analogue of \cite[Theorem $3.8$]{LasSchu 90}. We also show, in Section \ref{SecLusztig}, that both keys are related via the Schützenberger evacuation in type $C$, or Lusztig involution, explicitly realized here using Baker-Lecouvey insertion or Sheats \emph{jeu de taquin}.
 During the preparation of the paper \cite{San 19}, Jacon and Lecouvey informed us about their paper \cite{JaLec 19}, where, with a different approach, they find the same key map in type $C$. 
In the model of alcove paths, Lenart defined an initial key and a final key \cite{Len 07}, for any Lie type, related via the Lusztig involution. 
There is a crystal isomorphism between the alcove path model and the Kashiwara-Nakashima tableau  model in types A and C \cite{LenLub 15}. The key maps in types $A$ and $C$ coincide in the alcove and tableau models.

The paper is organized as follows. In Section \ref{SecBruhat}, we discuss the Weyl group of type $C$, $B_n$, the Bruhat order on $B_n$ and on its cosets modulo the stabilizer of $\lambda$, the Kashiwara-Nakashima tableaux and the symplectic key tableaux. Those key tableaux are used in Proposition \ref{minimalBn} to explicitly construct the minimal length coset representatives and, 
recalling some results from Proctor \cite{Pro 81}, Theorem \ref{keycoset} gives a tableau criterion for the Bruhat order on $B_n$ and on those cosets. 
Section \ref{SecRS} briefly recalls Baker-Lecouvey insertion, the Sheats \textit{jeu de taquin} and Robinson-Schensted type $C$ correspondence, to discuss the plactic and coplactic monoids of type $C$. These monoids describe connected components and crystal isomorphic connected components of type $C$ Kashiwara crystal, for a $U_q(sp_{2n})$-module.
In Section \ref{SecKeys}, we extend the concept of frank word, in type $A$, to type $C$, and 
 our Theorem \ref{rightkeymap} gives right and left key maps. Using the right key map, Theorem \ref{DemKey}, our main result, describes the tableaux that contribute to a Demazure crystal atom and to a Demazure crystal in type $C$. In Section \ref{SecLusztig}, we develop a type $C$ evacuation within the plactic monoid, an analogue of the $J$-operation discussed by Schützenberger for semistandard Young tableaux in \cite{Schu 76}. Theorem \ref{key&Lusz} shows that the evacuation of the right key of a Kashiwara-Nakashima tableau is the left key of the evacuation of the same tableau. 
 
 \textbf{Caution:} Operators, maps and group actions act on the right.

\section{Weyl group of type $C$, Bruhat order and symplectic key tableau}\label{SecBruhat}

Fix $n\in \N_{>0}$. Define the sets $[n]=\{1< \dots< n\}$ and $[\pm n]=\{1<\dots< n< \overline{n}< \dots< \overline{1}\}$ where $\overline{i}$ is just another way of writing $-i$.
The hyperoctahedral group is the group, $B_n$, with generators $s_i,\, 1\leq i\leq n$, subject to the relations:
$s_i^2 = 1, \, 1 \le i \le n;\; (s_is_{i+1})^3 = 1, 1 \le i \le n - 2;\;(s_{n-1}s_n)^4=1;(s_is_j)^2 = 1, \,1 \le i < j \le n,\, |i - j| > 1$. This group is a Coxeter group and we consider the (strong) Bruhat order on its elements \cite{BB05}. Theorem \ref{keycoset} gives a symplectic tableau criterion for this order in $B_n$.
The elements of $B_n$ can be seen as odd bijective maps from $[\pm n]$ to itself. 
The subgroup with the generators $s_1, \dots, s_{n-1}$ is the symmetric group $\mathfrak{S}_n$. 
The groups $\mathfrak{S}_n$ and $B_n$ are the Weyl groups for the root systems of types $A_{n-1}$ and $C_n$, respectively.
Given $\sigma\in B_n$, $\sigma=[a_1\, a_2\,\dots \,a_n]$, where $a_i=(i)\sigma$ for $i \in [n]$, is the window notation of $\sigma$. Given a vector $v\in \Z^n$,  $s_i$, with $i\in [n]$, acts on $v$, $vs_i$, swapping the $i$-th and the $(i+1)$-th entries, if $i\in [n-1]$, or changing the sign of the last entry, if $i=n$. 
The length of $\sigma\in B_n$, $(\sigma)\ell$, is the least number of generators of $B_n$ needed to go from $[1\,2\,\dots\,n]$, the identity map, to $\sigma$. Any expression of $\sigma$ as a product of $(\sigma)\ell$ generators of $B_n$ is called reduced. 
\subsection{Kashiwara-Nakashima tableau in type $C$}
We recall the symplectic tableaux used by
Kashiwara and Nakashima to label the vertices of the type $C$ crystal graphs \cite{NK 94}, which are a variation of De Concini tableaux \cite{DeC 79}.
	A vector $\lambda=(\lambda_1,\dots, \lambda_n)\in\Z^n$ is a partition of $|\lambda|=\sum\limits_{i=1}^n \lambda_i$ if $\lambda_1\geq \lambda_2\geq\dots\geq\lambda_n\geq 0$.
A partition $\lambda$ is identified with its Young diagram of shape $\lambda$, an array of boxes, left justified, in which the $i$-th row, from top to bottom, has $\lambda_i$ boxes.
%
	For example, the Young diagram of shape $\lambda=(2,2,1)$ is $\YT{0.14in}{}{
		{{},{}},
		{{},{}},
		{{}}}$.
Given $\mu$ and $\nu$ two partitions with $\nu\leq \mu$ entrywise, we write $\nu\subseteq \mu$. The Young diagram of shape $\mu/\nu$ is obtained after removing the boxes of the Young diagram of $\nu$ from the Young diagram of $\mu$.
A semistandard Young skew tableau of shape $\mu/\nu$ on the completely ordered alphabet $A$ is a filling of the diagram $\mu/\nu$ with letters from $A$, such that the entries are strictly increasing in each column and weakly increasing in each row. When $|\nu|=0$ we obtain a 
semistandard Young tableau (SSYT) of shape $\mu$.
Denote by $SSYT(\mu/\nu, A)$
the set of all semistandard skew Young tableaux $T$ of shape $\mu/\nu$, with entries in $A$. When $A=[n]$, we write $SSYT(\mu/\nu, n)$.

From now on we consider tableaux on the alphabet $[\pm n]$.
A \emph{column} is a strictly increasing sequence of numbers in $[\pm n]$ and it is usually displayed vertically.
A column is said to be \emph{admissible} if the following column condition (1CC) holds for that column:

\begin{definition}[1CC]\label{1CC}
	Let $C$ be a column. The $1CC$ holds for $C$ if for all pairs $i$ and $\overline{i}$ in $C$, where $i$ is in the $a$-th row counting from the top of the column, and $\overline{i}$ in the $b$-th row counting from the bottom, we have $a+b\leq i$.
\end{definition}
If a column $C$ is admissible then $C$ has at most $n$ letters.
If not, we say that $C$ is not admissible at $z$, where $z$ is the minimal positive integer such that $z$ and $\overline{z}$ exist in $C$ and there are more than $z$ numbers in $C$ with absolute value less or equal than $z$. 
	For instance, the column $\YT{0.15in}{}{
		{{1}},
		{{2}},
		{{\overline{1}}}}$ is not admissible at $1$.
We now define splittable columns:
\begin{definition}
	Let $C$ be a column and let $I = \{z_1 > \dots > z_r\}$ be the
	set of unbarred letters $z$ such that the pair $(z, \overline{z})$ occurs in $C$. The column
	$C$ can be split when there exists a set of $r$ unbarred
	letters $J = \{t_1 > \dots > t_r\} \subseteq [n]$ such that $t_1$ is the greatest letter of $[n]$ satisfying $t_1 < z_1$,  $t_1 \not\in C$, and $\overline{t_1}\not\in C$; and for $i=2, \dots, r$, $t_i$ is the greatest letter of $[n]$ satisfying $t_i < \min(t_{i-1},   z_i)$,  $t_i \not\in C$, and $\overline{t_i} \not\in C$.\end{definition}
	A column $C$ is admissible if and only if $C$ can be split  \cite[Lemma 3.1]{She 99}.
	If $C$ can be split then we define right column of $C$, $Cr$, and the left column of $C$, $ C\ell$. The column $Cr$ is obtained by replacing, in $C$,  $\overline{z_i}$ with $\overline{t_i}$ for each letter $z_i \in I $ and reordering, if needed; $ C\ell$ is obtained after replacing $z_i$ with $t_i$ for each letter $z_i \in I $ and reordering, if needed.
	If $C$ is admissible then $ C\ell\leq C \leq Cr$ by entrywise comparison. If $C$ does not have symmetric entries, then $C$ is admissible and  $ C\ell= C =Cr$.
%
%
%
%
	Let $T$ be a skew tableau with all of its columns admissible. The split form of a skew tableau $T$, $(T)spl$, is the skew tableau obtained after replacing each column $C$ of $T$ by the two columns $ C\ell\,Cr$. The tableau $(T)spl$ has double the amount of columns of $T$.
		A semistandard skew tableau $T$ is a Kashiwara-Nakashima (KN) skew tableau if its split form is a semistandard skew tableau. We define $\mathcal{KN}(\mu/\nu, n)$ to be the set of all KN tableaux of shape $\mu/\nu$ in the alphabet $[\pm n]$. When $|\nu|=0$ we obtain $\mathcal{KN}(\mu, n)$. 
When $T\in SSYT(\mu/\nu, [\pm n])$ with no symmetric entries in any of its columns, $T$ is a KN skew tableau. In particular $SSYT(\mu/\nu, n)\subseteq \mathcal{KN}(\mu/\nu, n)$.

The weight of a word $w$, $(w)\text{wt}$, on the alphabet $[\pm n]$ is the vector in $\Z^n$ where the entry $i$ is the multiplicity of the letter $i$ minus the multiplicity of the letter $\overline{i}$, for $i\in [n]$. The length of $w$ is its number of letters.
The column reading word of a KN tableau $T$, $(T)cr$, is obtained reading down columns, right to left. 
 The weight of $T$ is the vector $(T)\text{wt}:=((T)cr)\text{wt}$. 
Let
	$T=\YT{0.15in}{}{
		{{2},{2}},
		{{3},{3}},
		{{\overline{3}}}}$ and $n=3$. The split form of
	$T$ is the tableau $(T)spl=\YT{0.15in}{}{
		{{1}, {2},{2},{2}},
		{{2},{3},{3},{3}},
		{{\overline{3}},{{\overline{1}}}}}$. Hence $T\in \mathcal{KN}((2,2,1),3)$.
  Also $(T)cr=23\,23\overline{3}$ and $(T)\text{wt}=((T)cr)\text{wt}=(0,2,1)$.

Given a partition $\lambda\in\Z^n$, the $B_n$-orbit of $\lambda$ is the set $\lambda B_n:=\{\lambda\sigma\mid \sigma\in B_n\}$.
\begin{definition}
	A key tableau in type $C$, on the alphabet $[\pm n]$, is a KN tableau in $\mathcal{KN}(\lambda,n)$, for some partition $\lambda$, in which the set of elements of each column is contained in the set of elements of the previous column and the letters $i$ and $\overline{i}$ do not appear simultaneously as entries, for any $i\in [n]$.
See the example at the end of Section \ref{Ssec Bruhat}.
\end{definition}
%
%
Given $v=(v_1, \dots, v_n)\in \Z^n$, put in the first $|v_i|$ columns the letter $i$ if $v_i>0$ or $\overline{i}$ if $v_i<0$. This defines a key tableau of weight $v$, $(v)K$.
\begin{proposition}\label{UniKw}
	Let $v\in \lambda B_n$. There is exactly one key tableau $(v)K$ whose weight is $v$. The shape of $(v)K$ is $\lambda$. $(\lambda)K$ is the only KN tableau of weight and shape $\lambda$. 
	The map $v\mapsto (v)K$ is a bijection between $\lambda B_n$ and key tableaux in $\mathcal{KN}(\lambda, n)$.
\end{proposition}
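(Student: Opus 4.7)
The plan has three parts: first, verify that the construction produces a valid element $(v)K\in\mathcal{KN}(\lambda,n)$ of shape $\lambda$ and weight $v$; second, show that a key tableau of shape $\lambda$ is determined by its weight, which yields the bijection; third, upgrade the uniqueness to all KN tableaux in the special case $v=\lambda$.

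For the first step I would write $v=\lambda\sigma$ with $\sigma\in B_n$, so that $(|v_1|,\dots,|v_n|)$ is a rearrangement of $(\lambda_1,\dots,\lambda_n)$. Setting $\epsilon_i=\mathrm{sgn}(v_i)$, column $k$ of $(v)K$ is the letter-set $S_k=\{\epsilon_i\cdot i:|v_i|\ge k\}$ read in $[\pm n]$-order. The inclusions $S_{k+1}\subseteq S_k$ and the absence of any pair $\{i,\overline{i}\}$ in a single column are built into the construction. For row-semistandardness between columns $k$ and $k+1$: if letter $x$ occupies row $r$ of column $k+1$, then $S_{k+1}\subseteq S_k$ places $x$ in some row $r'\ge r$ of column $k$, so the row-$r$ entry of column $k$ is $\le x$. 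Since no column has symmetric entries, $C\ell=C=Cr$ for every column of $(v)K$, so $((v)K)spl$ merely duplicates each column and inherits semistandardness from the row inequality just verified; hence $(v)K\in\mathcal{KN}(\lambda,n)$. The column-heights $|S_k|=|\{i:|v_i|\ge k\}|$ form the conjugate of the decreasing rearrangement of $(|v_i|)$, namely $\lambda'$, so the shape is $\lambda$.

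For the bijection, let $T$ be any key tableau in $\mathcal{KN}(\lambda,n)$ with weight $w$. The no-symmetric-entries condition forces the multiplicity of $i$ (resp.\ $\overline{i}$) in $T$ to equal $|w_i|$ with sign $\mathrm{sgn}(w_i)$, and the nestedness of the column-sets forces that letter to occupy exactly columns $1,\dots,|w_i|$. This reconstructs $T$ uniquely from $w$, and the column-height count of the previous paragraph shows $(|w_1|,\dots,|w_n|)$ is a rearrangement of $\lambda$, i.e.\ $w\in\lambda B_n$. Thus weight determines key tableau, and $v\mapsto(v)K$ is the claimed bijection between $\lambda B_n$ and the key tableaux of $\mathcal{KN}(\lambda,n)$.

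Finally, the statement that $(\lambda)K$ is the unique KN tableau of shape \emph{and} weight $\lambda$ requires a separate argument, since a priori a KN tableau need not be a key tableau. I would combine the two identities $\sum_i(\#i-\#\overline{i})=|\lambda|$ (from the weight) and $\sum_i(\#i+\#\overline{i})=|\lambda|$ (from the number of cells) to conclude $\#\overline{i}=0$ for every $i$; then $T\in SSYT(\lambda,n)$ with weight $\lambda$, and the classical type $A$ uniqueness of the tableau with row $i$ filled by $i$'s finishes the proof. The one step I expect to require the most care is the row-semistandardness check in step one, but comparing row positions in the nested letter-sets $S_{k+1}\subseteq S_k$ makes it mechanical.
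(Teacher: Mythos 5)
Your proof is correct and complete. The paper itself states Proposition~\ref{UniKw} without any proof (it is an extended abstract, and this proposition is one of the results whose justification is deferred to the full version \cite{San 19}), so there is no in-paper argument to compare against; your write-up simply supplies the missing verification. All three parts check out: the row-semistandardness via the nested letter-sets $S_{k+1}\subseteq S_k$, the observation that absence of symmetric entries makes the split form a trivial duplication of columns, the reconstruction of a key tableau from its weight, and the cell-count versus weight-sum argument forcing $\#\overline{i}=0$ when the weight is $\lambda$ followed by the classical type $A$ uniqueness of the SSYT of equal shape and content. The only point worth flagging is cosmetic: in the last step you should note explicitly that a KN tableau with no barred entries automatically lies in $SSYT(\lambda,n)$ (the paper records the inclusion $SSYT(\mu/\nu,n)\subseteq\mathcal{KN}(\mu/\nu,n)$ but you are using the converse direction for unbarred fillings, which is immediate since such a filling is semistandard on $[n]$).
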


\subsection{The Bruhat order on $B_n$ and cosets of $B_n$}\label{Ssec Bruhat}
Given a partition $\lambda\in \Z^n$, let $W_\lambda=\{\rho\in B_n\mid \lambda\rho=\lambda\}$ be the stabilizer of $\lambda$, under the action of $B_n$,  a standard parabolic subgroup of $B_n$ generated by a subset of simple generators. 
Let $W_\lambda\setminus B_n=\{W_\lambda\sigma:\sigma\in B_n\}$ be the set of right cosets of $B_n$ determined by the subgroup $W_\lambda$. Given a right coset in $W_\lambda\setminus B_n$, all its elements return the same vector when acting on $\lambda$. Hence the vectors $v$ in the $B_n$-orbit of $\lambda$ define a labelling for the right cosets. 
Therefore, the symplectic key tableaux in $\mathcal{KN}(\lambda, n)$ and the cosets of $B_n$, modulo $W_\lambda$, are in bijection: $(v)K\leftrightarrow v \leftrightarrow W_\lambda\sigma_v$, where $\sigma_v$ is the minimal length coset representative.
%
Key tableaux, $(v)K$, $v\in\lambda B_n$, may be used
to explicitly construct the minimal length coset representatives of  $W_\lambda\setminus B_n$, a generalization of what Lascoux does for vectors in $\mathbb{N}^n$ (hence $\sigma_v \in \mathfrak{S}_n$).
\begin{proposition}\label{minimalBn}
	Let $v\in \lambda B_n$ and $T$ the tableau obtained after adding the column $\YT{0.15in}{}{
		{{1}},
		{{2}},
		{{\vdots}},
		{{n}}}$ to the left of $(v)K$. The aforementioned minimal length coset representative $\sigma_v$ is given by the reading word $T$, where entries with the same absolute value are read just once. 
\end{proposition}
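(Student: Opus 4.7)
The plan is to show that the reading of $T$ produces the window notation of a signed permutation $\pi\in B_n$ satisfying two properties: (i) $\lambda\pi = v$, so $\pi \in W_\lambda \sigma_v$; and (ii) $\ell(s\pi) > \ell(\pi)$ for every simple generator $s\in W_\lambda$. These two conditions characterise the unique minimal-length representative of the coset, so one concludes $\pi=\sigma_v$. As a preliminary check, column $p$ of $(v)K$ (counted from the left, $1\le p\le \lambda_1$) contains $\{\mathrm{sign}(v_i)\,i : |v_i|\ge p\}$ listed in $[\pm n]$-order, so reading the columns of $T$ right-to-left the absolute value $|i|$ is first encountered in the column indexed by $p=|v_i|$, or---if $v_i=0$---only in the prepended column $[1,2,\dots,n]^{t}$. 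Since the identity column contributes every element of $[n]$, the list of first occurrences is a length-$n$ sequence $a_1a_2\cdots a_n\in [\pm n]^n$ with distinct absolute values $\{1,\dots,n\}$, i.e.\ a valid window notation for some $\pi\in B_n$.

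Next I would verify $\lambda\pi=v$ via the right-action formula $(\lambda\pi)_{|a_j|}=\mathrm{sign}(a_j)\,\lambda_j$. Setting $m_q:=|\{k:\lambda_k\ge q\}|$, the positions $j$ indexing the batch read from column $p$ of $(v)K$ form the range $m_{p+1}<j\le m_p$; since $\lambda$ is weakly decreasing and $\{|v_i|\}$ is a rearrangement of $\{\lambda_k\}$, one has $\lambda_j=p$ throughout this range. As $a_j=\mathrm{sign}(v_i)\,i$ with $|v_i|=p$, the formula delivers $(\lambda\pi)_{|a_j|}=\mathrm{sign}(v_i)\cdot p=v_i$, as required. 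The remaining positions (with $\lambda_j=0$) come from the identity column and yield $a_j=i$ with $v_i=0$, consistent with $(\lambda\pi)_i=0$.

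For minimality, I would invoke the Coxeter-group criterion: $\pi$ is the minimum of $W_\lambda\pi$ iff $\ell(s\pi)>\ell(\pi)$ for each simple generator of $W_\lambda$. These generators are exactly the $s_i$ with $i<n$ and $\lambda_i=\lambda_{i+1}$, together with $s_n$ when $\lambda_n=0$. In $B_n$, $\ell(s_i\pi)>\ell(\pi)$ (for $i<n$) is equivalent to $a_i<a_{i+1}$ in $[\pm n]$-order, and $\ell(s_n\pi)>\ell(\pi)$ is equivalent to $a_n>0$. Both conditions are automatic from the reading: consecutive positions $j,j+1$ with $\lambda_j=\lambda_{j+1}$ lie in the same batch and are therefore read top-to-bottom from a single column, hence in $[\pm n]$-order; while if $\lambda_n=0$ the final batch is read from the identity column, whose entries are positive.

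The principal obstacle will be Step 2: aligning the batch structure of the reading (indexed by columns of $(v)K$ plus the identity column) with the block structure of $\lambda$ demands careful bookkeeping, and the sign conventions of the right-action of $B_n$ on $\Z^n$ must be tracked precisely. Step 3 relies on the $[\pm n]$-order length criterion for $B_n$, subtler than the natural-order analogue for $\mathfrak{S}_n$, but once invoked the verification is immediate from the sorted structure of the columns.
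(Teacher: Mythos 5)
Your proof is correct, and it is the natural argument: the paper itself omits the proof of Proposition~\ref{minimalBn} (deferring, like most proofs here, to the full version \cite{San 19}), so there is nothing to compare against, but your three steps --- checking the reading yields a valid window $[a_1\cdots a_n]$, verifying $\lambda\pi=v$ via $(\lambda\pi)_{|a_j|}=\mathrm{sign}(a_j)\lambda_j$ after matching the column batches of $(v)K$ with the constant blocks of $\lambda$, and then confirming $\ell(s\pi)>\ell(\pi)$ for the generators $s_i$ ($\lambda_i=\lambda_{i+1}$) and $s_n$ ($\lambda_n=0$) of $W_\lambda$ --- are exactly what is needed. The one point to make fully explicit is the descent criterion you invoke in Step~3: with the paper's conventions ($s_n$ negating the \emph{last} coordinate, maps acting on the right), left descents of $\pi=[a_1\cdots a_n]$ are detected by comparisons in the order $1<\cdots<n<\overline{n}<\cdots<\overline{1}$, not the integer order; this is correct and is precisely why the top-to-bottom column reading (which is increasing in that order) and the positivity of the identity column give the conclusion, but it deserves a precise reference or a short verification since it differs from the $s_0$-indexed convention of \cite{BB05}.
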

Let	$v=(3,\overline{3},0, 0, \overline{2})$.
Then
$(v)K=\YT{0.15in}{}{
	{{1},{1},{1}},
	{{\overline{5}},{\overline{5}},{\overline{2}}},
	{{\overline{2}},{\overline{2}}}}$, $T=\YT{0.15in}{}{
	{{1}, {1},{1},{1}},
	{{2},{\overline{5}},{\overline{5}},{\overline{2}}},
	{{3},{\overline{2}},{\overline{2}}},
	{{4}},{{5}}}
$ and $\sigma_v=[1\overline{2}\overline{5}34]$.

Given  $v$ and $u$  in $\lambda B_n$, we write $v\le u$ to mean $\sigma_v\leq\sigma_u$ in the Bruhat order. Put $\Lambda_n=(n, n-1, \dots, 1)$.
Thanks to Theorem $3BC$ of Proctor's Ph.D. thesis \cite{Pro 81}, we have a tableau criterion for the Bruhat order on vectors in the same $B_n$-orbit.
\begin{theorem}\cite[Theorem $3BC$]{Pro 81}\label{keycoset}
	Let $v,u\in\lambda B_n$. Then $\sigma_v\leq\sigma_u$ if and only if $(v)K\leq (u)K$, by entrywise comparison. In particular, for $\sigma, \rho \in B_n$, $\sigma\leq \rho \Leftrightarrow (\Lambda_n \sigma)K\leq (\Lambda_n \rho)K$.
\end{theorem}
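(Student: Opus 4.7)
The plan is to deduce Theorem \ref{keycoset} from Proctor's Theorem $3BC$ via a column-by-column dictionary with the key tableaux from Proposition \ref{UniKw}. Proctor's theorem phrases the Bruhat order on the minimal length representatives of $W_\lambda\backslash B_n$ in terms of a family of ``flag'' inequalities between certain sorted subsets associated with the window notations of $\sigma_v$ and $\sigma_u$, ordered inside $[\pm n]$; the task reduces to identifying these inequalities with the entrywise comparison of $(v)K$ and $(u)K$.

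First I would verify the unpacking of $(v)K$: by the construction preceding Proposition \ref{UniKw}, its $j$-th column, read top to bottom, lists in the increasing order on $[\pm n]$ the multiset $\{\mathrm{sign}(v_i)\cdot i : |v_i|\geq j\}$, where $\mathrm{sign}(v_i)\cdot i$ denotes $i$ if $v_i>0$ and $\overline{i}$ if $v_i<0$. Consequently, the condition ``$(v)K\leq (u)K$ entrywise'' decomposes as the conjunction, over $j\in\{1,\dots,\lambda_1\}$, of column-$j$ dominance inequalities between these signed level sets.

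Next I would match these with Proctor's conditions. Proctor's criterion compares, for each threshold $j$, the sorted signed level set attached to $v$ against that of $u$, in the order on $[\pm n]$. Since Proposition \ref{minimalBn} realises $v\mapsto\sigma_v$ as a bijection between $\lambda B_n$ and the minimal length coset representatives of $W_\lambda\backslash B_n$, and since the Bruhat order on the parabolic quotient is inherited from $B_n$ along these representatives, Proctor's Theorem $3BC$ yields at once the equivalence $\sigma_v\leq\sigma_u \iff (v)K\leq (u)K$.

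The ``in particular'' clause follows because for $\lambda=\Lambda_n=(n,n-1,\dots,1)$ the entries of any $v\in \Lambda_n B_n$ are nonzero with pairwise distinct absolute values, so $W_{\Lambda_n}$ is trivial and the map $v\mapsto\sigma_v$ is simply $\Lambda_n\sigma\mapsto\sigma$. The principal subtlety is to verify that Proctor's sorted-set inequalities align, index by index, with the columns of the key tableau; both encode the same level sets of $|v|$ decorated with the signs of $v$, but matching them requires checking that the total order used to sort within each column and the total order implicit in Proctor's flag conditions are the same order $1<2<\dots<n<\overline{n}<\dots<\overline{1}$ on $[\pm n]$.
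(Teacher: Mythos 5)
Your derivation follows the same route as the paper, which presents this statement purely as a consequence of Proctor's Theorem $3BC$ \cite{Pro 81} and offers no argument beyond the citation; your column-by-column dictionary, identifying the $j$-th column of $(v)K$ with the sorted signed level set $\{\mathrm{sign}(v_i)\cdot i : |v_i|\geq j\}$, i.e.\ with the sorted prefix $\{(1)\sigma_v,\dots,(\lambda'_j)\sigma_v\}$ of the window of $\sigma_v$, is exactly the translation needed to invoke Proctor's flag criterion. The sketch is sound, including the reduction of the ``in particular'' clause to the triviality of $W_{\Lambda_n}$.
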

\noindent
For instance,	$v\!=\!(3,\overline{3},0, 0, \overline{2})\!\leq\! u\!=\!(\overline{3},2,0,\overline{3},0)$, because 
$(v)K\!\!=\!\!\YT{0.15in}{}{
	{{1},{1},{1}},
	{{\overline{5}},{\overline{5}},{\overline{2}}},
	{{\overline{2}},{\overline{2}}}}$ $
\le$ $ (u)K\!\!=\!\!\YT{0.15in}{}{
	{{2},{2},{\overline{4}}},
	{{\overline{4}},{\overline{4}},{\overline{1}}},
	{{\overline{1}},{\overline{1}}}}
$.

\section{Type $C$ crystal graphs, plactic and coplactic monoids}\label{SecRS}
Let $[\pm n]^\ast$ be the free monoid on the alphabet $[\pm n]$. 
Recall the type $C_n$ simple roots $\{\alpha_i=\mathbf{e_1}-\mathbf{e_2}, i\in [n-1]\}\cup \{\alpha_n=2\mathbf{e_n}\}$. Here a Kashiwara crystal of type $C_n$ is a nonempty set $\mathfrak{B}$ together with the following maps and statistics \cite{BSch 17}:
$e_i, f_i:\mathfrak{B}\rightarrow \mathfrak{B}\sqcup\{0\}$, 
$ \varepsilon_i, \varphi_i:\mathfrak{B}\rightarrow \Z$,
$\text{wt}: \mathfrak{B}\rightarrow \Z^n$,
where $i\in [n]$ and $0\notin \mathfrak{B}$ is an auxiliary element, such that:
if $a, b \in \mathfrak{B}$ then $(a)e_i=b\Leftrightarrow (b)f_i=a$ and in this case $(b)\text{wt}=(a)\text{wt}+\alpha_i$, $(b)\varepsilon_i=(a)\varepsilon_i-1$ and $(b)\varphi_i=(a)\varphi_i+1$;
for all $a \in \mathfrak{B}$, we have $(a)\varphi_i=\langle (a)\text{wt}, \frac{2\alpha_i}{\langle \alpha_i, \alpha_i\rangle}\rangle+(a)\varepsilon_i$, where $\langle ,\rangle$ is the usual inner product in $\R^n$.
For all $a \in \mathfrak{B}$, we have 
$(a)\varphi_i=\max\{k\in \Z_{\geq 0}\mid (a)f_i^k\neq 0\}$ and  $(a)\varepsilon_i=\max\{k\in \Z_{\geq 0}\mid (a)e_i^k\neq 0\}$.
An element $u\in \mathfrak{B}$ such that $(u)e_i=0$ (or $(u)f_i=0$) for all $i\in [n] $ is called  a \emph{highest weight element} (or \emph{lowest weight element}).
We associate with $\mathfrak{B}$ a coloured oriented graph with weighted vertices in $\mathfrak{B}$ and edges labelled by $i\in [n]$: $b\overset{i}{\rightarrow} b'$ if and only if $b'=(b)f_i$, $i\in [n]$, $b, b'\in\mathfrak{B}$. This is the \emph{crystal graph} of $\mathfrak{B}$. The $C_n$ standard crystal $\mathbb{B}$ is
$1\xrightarrow{1} 2\xrightarrow{2} \dots\xrightarrow{n-1} n\xrightarrow{n} \overline{n}\xrightarrow{n-1}\dots \xrightarrow{1}1$, with set
$\mathbb{B}=[\pm n]$, 
where $(i)\text{wt}=\bf e_i$, $(\overline{i})\text{wt}=-\bf e_i$. The highest weight word is the word $1$, and the lowest weight word is $\overline{1}$.

Let $\mathfrak{B}$ and $\mathfrak{C}$ be two crystals associated to the same root system. The \emph{tensor product} $\mathfrak{B}\otimes \mathfrak{C}$ is a crystal whose set is the Cartesian product $\mathfrak{B}\times\mathfrak{C}$, where its elements are $b\otimes c$, $b\in\mathfrak{B}$ and $c\in\mathfrak{C}$, with $(b\otimes c)\text{wt}=(b)\text{wt}+(c)\text{wt}$. The crystal operator $f_i$ is defined by $(b\otimes c)f_i=\begin{cases}
(x)f_i\otimes y\; \text{if}\; (c)\varphi_i\leq(b)\varepsilon_i\\
x\otimes (y)f_i\; \text{if}\; (c)\varphi_i>(b)\varepsilon_i
\end{cases}$ $\!\!\!\!\!\!\!\!$, and $e_i$ is its inverse. Using the tensor product we can define the crystal $\mathbb{B}^{\otimes k}$ of words of length $k$. Thus, we define how the crystal operators $f_i$ and $e_i$ act on any finite word. This operators can be described via the signature rule, see \cite{BSch 17}. Let $G_n=\bigoplus\limits_{k\geq 0}\mathbb{B}^{\otimes k}$ be the type $C_n$ crystal of all words in $[\pm n]^\ast$. The  crystal $G_n$ is the union of connected components where each component has a unique highest (lowest) weight word. Two connected components are isomorphic if and only if they have the same highest weight \cite{Lec 02}.

The Robinson-Schensted (RS) type $C$ correspondence is a bijection between words $w\in G_n$ of length $k$, and tuples consisting of a KN tableau $(w)P$ 
 and an \emph{oscillating tableau} $Q$, of length $k$, with the same final shape as $(w)P$, see \cite{Lec 02}.
 We denote this map by $w\mapsto ((w)P, Q)$, where $(w)P$ can be computed via Sheats \emph{jeu de taquin} or \emph{Baker-Lecouvey insertion}. 
The symplectic \textit{jeu de taquin} \cite{Lec 02, She 99} is a weight-preserving procedure that allows us to change the shape of a  KN skew tableau and eventually rectify it, i.e., make it to have partition shape. It is a variation of the ordinary \textit{jeu de taquin} for skew SSYTs.
The rectification is independent of the order in which the inner corners of $\nu$  are filled \cite[Corollary 6.3.9]{Lec 02}. 

The Baker-Lecouvey insertion \cite{Ba 00,Lec 02} is a bumping algorithm that, given a word $w$ in the alphabet $[\pm n]$, returns the KN tableau $(w)P$.
 It depends on the symplectic jeu de taquin.  This insertion is similar to the usual column insertion for SSYTs. 
However, when inserting a letter it may happen that we remove a cell from the inserted tableau, instead of adding. The length of $((w)P)cr$ might be less than the length of $w$, but the weight is preserved, $(w)\text{wt}=((w)P)\text{wt}$. 
If the word $w$ does not have symmetric letters, then the insertion works just like the column insertion for SSYTs.
 If $l$ is the length of $w$, $(w)P$ is the rectification of the skew tableau of shape $\Lambda_l/\Lambda_{l-1}$ and reading word $w$ \cite[Corollary 6.3.9]{Lec 02}. 
 More generally, if $T\in \mathcal{KN}(\mu/\nu, n)$, the rectification of $T$ coincides with $((T)cr)P$.
Given $w_1, w_2 \in [\pm n]^\ast$, the relation
$w_1 \sim w_2\Leftrightarrow (w_1)P=(w_2)P$ defines an equivalence relation on $[\pm n]^\ast$ known as Knuth equivalence. The type $C$ plactic monoid is the quotient $[\pm n]^\ast/\sim$ where each Knuth (plactic) class is uniquely identified with a KN tableau \cite{Lec 02}. Hence two Knuth-related words have the same weight. It is also described as the quotient of $[\pm n]^\ast$ by the elementary Knuth relations; see \cite{Lec 02} for details. If $w_1 \sim w_2$ then they occur in the same place in two isomorphic connected components of $G_n$ 
\cite{Lec 02}, \emph{i.e.}, $(w_1)e_i\sim (w_2)e_i$ and $(w_1)f_i\sim(w_2)f_i$, $i \in [n]$.
  Two words $w_1,w_2\in [\pm n]^\ast$ are
  coplactic equivalent if and only if they belong to the same connected component of $G_n$. 
  The connected components of $G_n$ are the coplactic classes in the RS correspondence that identify words with the same oscillating tableau \cite[Proposition 5.2.1]{Lec 02}. 
 
Choose a word $w\in [\pm n]^\ast$ where the shape of $(w)P$ is $\lambda$. If we replace every word of its coplactic class with its insertion tableau we obtain the crystal $\mathfrak{B}^\lambda$ of tableaux  $\mathcal{KN}(\lambda, n)$. The crystal $\mathfrak{B}^\lambda$ does not depend on the initial choice of word $w$ in the plactic class of $w$ \cite[Theorem 6.3.8]{Lec 02}.
A word $w$ of $G_n$ is a highest weight word if and only if the weight of all its prefixes (including itself) is a partition. In this case, 
	$(w)P=(\lambda)K$.
\section{Right and left keys and Demazure atoms in type $C$}\label{SecKeys}
 We generalize Lascoux-Sch\"{u}tzenberger frank words, in type $A$ \cite{LasSchu 90},   to type $C$ to create  right and left key maps in type $C$. 
 Our Theorem \ref{DemKey} detects  the type $C$ KN tableaux for Demazure atoms. It is the type $C$ version of  Lascoux and Sch\"{u}tzenberger \cite[Theorem 3.8]{LasSchu 90}.
\begin{definition}
	The word $w\in [\pm n]^\ast$ is a type $C$ frank word if the lengths of its maximal column factors form a multiset equal to the multiset formed by the lengths of the columns of the tableau $(w)P$.
\end{definition}\vspace{-.2 cm}
For instance, $(23\overline{2}\overline{3}1)P=(\overline{1}113\overline{3})P=\YT{0.14in}{}{
		{{1},{1},{\overline{1}}},
		{{3}},
		{{\overline{3}}}}$. Since $23\overline{2}\overline{3}1$ and $\overline{1}113\overline{3}$ have one column of length $3$ and two columns of length $1$, they are frank words. 

Given a frank word $w$, the number of letters of $w$ is the same as the number of cells of $(w)P$. 
%
This implies that all columns of $w$ are admissible.
The following proposition is an extension of \cite[Proposition 7]{Ful 96} on SSYTs to KN tableaux.
\begin{proposition}\label{Fultonices}
	Let $T\in\mathcal{KN}(\lambda, n)$. Let $\mu/\nu$ be a skew diagram with same number of columns of each length as $T$. Then there is a unique KN skew tableau $S$ with shape $\mu/\nu$ that rectifies to $T$ and $(S)cr$ is a frank word. 
\end{proposition}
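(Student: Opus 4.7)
The plan is to adapt to type $C$ the proof of \cite[Proposition 7]{Ful 96}, replacing ordinary \textit{jeu de taquin} by the Sheats symplectic variant \cite{She 99}, with the confluence of symplectic rectification \cite[Corollary 6.3.9]{Lec 02} as the main tool.

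I would proceed by induction on $|\nu|$. The base case $|\nu|=0$ is immediate: the column-length hypothesis forces $\mu=\lambda$, so $S=T$ is both a valid choice and the only one. For the inductive step, pick an inner corner $c$ of $\nu$ (a cell of $\nu$ whose removal leaves a partition) and set $\nu' = \nu \setminus \{c\}$. A single forward symplectic slide starting at $c$, applied to any KN skew tableau $S$ of shape $\mu/\nu$ that rectifies to $T$, produces a KN skew tableau $S'$ of shape $\mu'/\nu'$ for a uniquely determined partition $\mu' \subseteq \mu$ with $|\mu \setminus \mu'|=1$. The key point is that $\mu'/\nu'$ has the same column-length multiset as $\mu/\nu$, and hence as $\lambda$: a single slide moves one cell through adjacent columns and preserves the multiset of column lengths of the full skew diagram. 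The inductive hypothesis applied to $(\mu'/\nu', T)$ delivers a unique $S'$; reversing the slide then gives a unique $S$ of shape $\mu/\nu$ rectifying to $T$, and $(S)cr$ is a frank word because the column-length multisets agree.

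For uniqueness, given $S_1$ and $S_2$ in $\mathcal{KN}(\mu/\nu,n)$ both rectifying to $T$, I would apply the same forward slide at the chosen inner corner $c$ to both. Because the slide algorithm is deterministic, the resulting $S_1'$ and $S_2'$ both have shape $\mu'/\nu'$ and rectify to $T$; by induction $S_1' = S_2'$, and inverting the (deterministic) reverse slide then yields $S_1 = S_2$.

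The main obstacle is verifying the column-length multiset preservation under an individual symplectic slide. In contrast to ordinary \textit{jeu de taquin}, where this preservation is transparent, the symplectic version involves the admissibility condition ($1CC$) and the splitting procedure at symmetric pairs $(i,\overline{i})$, so the invariance has to be checked using the internal structure of Sheats' algorithm as set out in \cite{She 99, Lec 02}. Once this is confirmed, the induction mirrors Fulton's type $A$ argument step for step.
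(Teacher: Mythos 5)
Your induction on $|\nu|$ rests on the claim that a single (symplectic) \emph{jeu de taquin} slide preserves the multiset of column lengths of the skew shape, and that this preservation is ``transparent'' for ordinary \emph{jeu de taquin}. That claim is false already in type $A$: slide the inner corner $(1,1)$ of the shape $(2,1)/(1)$ filled with $2$ in cell $(1,2)$ and $3$ in cell $(2,1)$; the $2$ moves left, the hole exits at $(1,2)$, and the result has shape $(1,1)$, so the column-length multiset changes from $\{1,1\}$ to $\{2\}$. In general a slide that enters column $j_0$ and whose hole exits in column $j^*$ replaces the pair of lengths $(\ell_{j_0},\ell_{j^*})$ by $(\ell_{j_0}+1,\ell_{j^*}-1)$, which preserves the multiset only when $j^*=j_0$ or $\ell_{j^*}=\ell_{j_0}+1$. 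This \emph{does} hold when $S$ rectifies to a tableau whose column lengths match those of $\mu/\nu$ --- but establishing that is essentially the content of the proposition (the analogue of the column bumping control behind \cite[Proposition 7]{Ful 96}), not a local property of Sheats' sliding rule that can be read off from \cite{She 99}. So the ``main obstacle'' you flag is not a verification to be postponed; without it the induction does not start.

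Even granting that invariance, two further steps are missing. For uniqueness: the forward slide at $c$ is deterministic given the filling, but the outer corner where the hole exits depends on the filling, so $S_1'$ and $S_2'$ need not have the same shape $\mu'/\nu'$, and the multiset condition alone does not determine the exit column when $\mu$ has several outer corners in columns of the appropriate length; the inductive hypothesis applied to two different shapes yields no contradiction. For existence: you must choose a target shape $\mu'/\nu'$ and an outer corner $d$ of $\mu$ and prove that the reverse slide of the inductively unique $S'$ started at $d$ terminates exactly at the prescribed inner corner $c$; nothing in the proposal forces this. The extended abstract defers the proof to \cite{San 19}, but the surrounding results (Corollary \ref{lastcolumn} and Proposition \ref{nestedcolumns}) indicate a column-by-column induction in which the last column of $S$ is shown to be determined by $T$ and its length via insertion and two-column \emph{jeu de taquin}; that is where the shape control your argument presupposes actually gets proved.
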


\begin{corollary}\label{lastcolumn} Let $S$ be as in the previous proposition. 
	The last column of $S$ depends only on the length of that column.
\end{corollary}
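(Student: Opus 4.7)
The plan is to fix $T\in\mathcal{KN}(\lambda,n)$ and a column length $k$ in the multiset of column lengths of $T$, and to show that if $\mu_1/\nu_1$ and $\mu_2/\nu_2$ are any two skew shapes matching this multiset and both having last column of length $k$, then the associated frank skew tableaux $S_1,S_2$ given by Proposition \ref{Fultonices} have the same last column $C_1=C_2$. My strategy is to isolate each last column by horizontal shifts and then to equalise the remaining parts using symplectic \emph{jeu de taquin}, invoking the uniqueness statement of Proposition \ref{Fultonices} at each stage.

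First, I observe that translating the rightmost column of $S_i$ further to the right while keeping its rows fixed produces a new KN skew tableau with exactly the same column reading word, hence rectifying to the same $T$. By the uniqueness in Proposition \ref{Fultonices}, the new object is the unique frank skew tableau of its enlarged shape, and its last column is still $C_i$. Iterating, I may assume that in each $S_i$ the last column occupies its own rows, disjoint from those of the remaining columns, so that the final column sits in an isolated position to the right of everything else.

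Next, I apply Sheats \emph{jeu de taquin} slides only to the portion of $S_i$ formed by its non-last columns; such slides never interact with the isolated final column. They preserve both the rectification $T$ and the multiset of column lengths, so Proposition \ref{Fultonices} continues to identify the result as the unique frank skew tableau of its new shape. I drive the non-last columns of $S_1$ and $S_2$ into a common canonical configuration (for instance by rectifying them to a fixed partition shape placed above the isolated column). Once $S_1$ and $S_2$ share the same shape, the uniqueness in Proposition \ref{Fultonices} forces $S_1=S_2$, and in particular $C_1=C_2$.

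The main obstacle I expect is checking that these reductions really preserve the frank property and the rectification, and that the isolated last column is genuinely inert under slides chosen inside the left portion of the figure. This boils down to the independence of the rectification on the order of Sheats slides, together with the fact that a slide path confined to the non-last columns cannot reach an isolated column lying in rows no earlier column occupies. Granting these standard features of the type $C$ \emph{jeu de taquin}, the corollary follows by the uniqueness in Proposition \ref{Fultonices}.
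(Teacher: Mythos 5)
Your reduction breaks at the isolation step. Translating the rightmost column of $S_i$ horizontally ``while keeping its rows fixed'' does not in general produce a skew diagram: in a shape $\mu/\nu$ the cells of each row form an interval of columns, so whenever the last column shares a row with an earlier column (the generic situation --- already for the straight shape $(2,1)$), moving it one position to the right creates a row with a gap, and the resulting figure is not of the form $\mu'/\nu'$. Proposition \ref{Fultonices} therefore cannot be invoked for the ``enlarged shape''. The translation is legal only when the last column already occupies rows disjoint from, and above, all the other columns, which is precisely the situation you were trying to manufacture.

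Even granting an isolated last column, the assertion that Sheats slides applied to the non-last columns ``preserve \dots the multiset of column lengths'' is false: a complete slide fills an inner corner in one column and vacates an outer corner in a generally different column, so a single slide already changes the column-length multiset (and in type $C$ a slide may in addition delete a pair of cells when a column becomes non-admissible). Hence the intermediate configurations, and in particular your canonical configuration with the non-last columns rectified to a partition, are not skew tableaux with the same number of columns of each length as $T$; Proposition \ref{Fultonices} does not apply to them, its uniqueness cannot be used to force $S_1=S_2$, and the frank property --- a statement about the maximal column factors of the reading word matching the column lengths of $T$ --- is destroyed rather than preserved along the way. A workable argument has to compare two admissible shapes directly, e.g.\ by connecting them through elementary changes of the positions of the non-last columns and controlling the effect on the last column via two-column slides in the spirit of Proposition \ref{nestedcolumns}, instead of passing through configurations outside the scope of Proposition \ref{Fultonices}.
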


Fixed a KN tableau $T$, consider the set of all possible last columns taken from skew tableaux with same number of columns of each length as $T$. Corollary \ref{lastcolumn} implies that this set has one element for each distinct column length of $T$. For each column $C$ in this set, consider the column $Cr$, its right column. The next proposition implies that this set of right columns is nested, if we see each column as the set formed by its letters.

\begin{proposition}\label{nestedcolumns}
	Consider $T$ a two-column KN skew tableau $C_1C_2$ with empty cells in the first column. Slide via symplectic \textit{jeu de taquin} the bottommost of those empty cell, obtaining a two-column KN skew tableau $C_1'C_2'$. Then $C_2'r\subseteq C_2r$.
\end{proposition}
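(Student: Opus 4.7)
My plan is to carry out a direct case analysis on Sheats' symplectic jeu de taquin for two-column skew KN tableaux. Because we slide the bottommost empty cell of $C_1$, which has no cell below it and is adjacent to $C_2$, the slide is purely horizontal: a letter $x$ of $C_2$ moves into the empty cell, producing $C_1' = C_1\cup\{x\}$ and, essentially, $C_2' = C_2\setminus\{x\}$ (possibly after local adjustments that Sheats' rule prescribes in order to preserve the 1CC in the new columns). I would first recall from \cite{Lec 02,She 99} the explicit rule selecting $x$, separating a \emph{type $A$-like} case (no pair $(z,\overline{z})$ of $C_2$ involves $x$) from the genuinely symplectic cases, where $x$ is itself one of a pair $(z,\overline{z})\subseteq C_2$.

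Next, I compare the splittings of $C_2$ and $C_2'$ using the definition of a splittable column. Write $I(C_2)=\{z_1>\dots>z_r\}$ and $\{t_1>\dots>t_r\}$ for the paired positive letters and recursively chosen replacements of $C_2$, and $I(C_2')$, $\{t_1'>\dots>t_{r'}'\}$ for the analogous data of $C_2'$. The unbarred letters of $C_2'r$ are inherited from $C_2\setminus\{x\}\subseteq C_2$, so they already belong to $C_2r$. For the barred letters, the key monotonicity observation is that removing a letter from $C_2$ can only relax the constraints ``$t_i\notin C$ and $\overline{t_i}\notin C$'' in the recursion, so each $t_i'$ is at least $t_i$ (in the type $A$-like case where $I(C_2')=I(C_2)$), which means the barred replacement $\overline{t_i'}$ lies at or below $\overline{t_i}$ in $[\pm n]$ and can be matched, by induction on $i$, to an element of $C_2r$.

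The main obstacle is the symplectic case in which $x$ is part of a pair, so $|I(C_2')|=r-1<r$. There the recursion shortens and the $t_i'$ may shift in more subtle ways. I plan to show that the letters of $C_2r$ which disappear in passing to $C_2'r$ are precisely $x$ together with the corresponding replacement $\overline{t}$ for the broken pair, while every remaining barred letter of $C_2'r$ can be identified with an element of $C_2r$. The tool here is the precise Sheats' selection rule for $x$, which is designed so that $C_1'C_2'$ remains a KN skew tableau, combined with the 1CC for $C_2$. As an independent consistency check, since a symplectic jeu de taquin slide preserves rectification, Proposition \ref{Fultonices} and Corollary \ref{lastcolumn} applied to two-column KN skew tableaux sharing a rectification constrain the last column and can be used to confirm the column-by-column inclusion. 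This book-keeping in the symplectic case is the technical heart of the proof.
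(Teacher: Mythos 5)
The extended abstract itself contains no proof of Proposition \ref{nestedcolumns} (the proofs are deferred to \cite{San 19}), so I can only judge your plan on its own terms. Its overall architecture --- a case analysis on the elementary symplectic \emph{jeu de taquin} move, tracking how the splitting of the second column changes --- is the right one, but what you have written is an outline whose decisive case is announced (``the technical heart'') rather than carried out, and it rests on two incorrect simplifications.

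First, the opening claim is false: the bottommost \emph{empty} cell of $C_1$ has no empty cell below it, but in general it has the filled cells of $C_1$ below it, so the elementary step compares the entry below with the entry to the right and may well be vertical. The all-vertical case is harmless (then $C_2'=C_2$), but the row at which a horizontal move eventually occurs determines which entry of $C_2\ell$ is involved, so this cannot be waved away. Second, and more seriously, the model $C_2'=C_2\setminus\{x\}$ with $x\in C_2$, the symplectic corrections being relegated to ``local adjustments'', hides exactly the content of the proposition: deleting an arbitrary letter of $C_2$ can destroy the inclusion. For instance, for $C_2=(2,3,\overline{3})$ one has $C_2r=(2,3,\overline{1})$, while deleting the letter $2$ gives $C_2'=(3,\overline{3})$ with $C_2'r=(3,\overline{2})\not\subseteq C_2r$. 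Hence the proof must use the precise selection rule of Sheats' horizontal move, which is formulated on the adjacent split columns $C_1r$ and $C_2\ell$ and does not simply delete the entry of $C_2$ occupying the vacated cell; moreover your monotonicity observation $t_i'\ge t_i$ points the wrong way, since a strictly larger replacement $t_i'$ yields a barred letter $\overline{t_i'}$ that need not lie in $C_2r$ at all, as the example shows. Finally, the proposed consistency check via Proposition \ref{Fultonices} and Corollary \ref{lastcolumn} is inapplicable: after a horizontal move the tableaux $C_1C_2$ and $C_1'C_2'$ have \emph{different} multisets of column lengths, and Corollary \ref{lastcolumn} only compares last columns of skew tableaux with the same multiset --- Proposition \ref{nestedcolumns} is precisely the tool needed to compare last columns of different lengths, so nothing is confirmed. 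To obtain a proof you must write out the elementary move explicitly (vertical; horizontal with $b\in C_2\ell$ unbarred; horizontal with $b$ barred) and verify the inclusion of right columns case by case.
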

Next, one gives the type $C$ right key map. It extends the one defined for type $A$ in \cite{LasSchu 90}.
\begin{theorem}[Right key map]\label{rightkeymap}
	Given a KN tableau $T$, if we replace each column with a column of the same size taken from the right columns of the last columns of all skew tableaux associated to $T$, then we obtain a key tableau. This tableau is the right key tableau of $T$ and we denote it by $(T)K_+$. (See Example \ref{ExRightkey}.)
\end{theorem}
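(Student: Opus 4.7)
The plan is to show that the tableau $(T)K_+$ constructed in the theorem has two key properties: (a) no column contains a symmetric pair $i,\bar{\imath}$; and (b) the set of entries of each column is contained in the set of entries of the preceding column. Since strictly increasing columns that are nested automatically yield weakly increasing rows, (a) and (b) together are precisely the defining properties of a key tableau in $\mathcal{KN}(\lambda,n)$, where $\lambda$ is the shape of $T$.

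First, I would check that the construction is well-defined. For each column length $\ell$ appearing in $T$, Proposition \ref{Fultonices} supplies a KN skew tableau $S_\ell$ with the same column-length multiset as $T$, rectifying to $T$ and with its rightmost column of length $\ell$ (just place $\ell$ last in the chosen skew shape). Corollary \ref{lastcolumn} then guarantees that this last column, call it $L_\ell$, depends only on $\ell$ and $T$. Hence its right column $L_\ell r$ is a well-defined column of length $\ell$, so the replacement prescribed by the theorem is unambiguous. Property (a) is immediate: $L_\ell$ is admissible, and the definition of the right column replaces every barred letter $\bar z$ whose partner $z$ also lies in $L_\ell$ by some $\bar t$ with $t<z$ and $t,\bar t\notin L_\ell$, so $L_\ell r$ has no symmetric pair.

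Property (b) is the main content. For two column lengths $\ell\geq \ell'$ appearing in $T$, I would show $L_{\ell'} r\subseteq L_\ell r$ as sets. The idea is to produce a chain of skew KN tableaux, all rectifying to $T$, whose last columns interpolate between one of length $\ell$ and one of length $\ell'$. Starting from a skew shape whose two rightmost columns have lengths $\ell$ and $\ell'$ with $L_\ell$ as the last column, one progressively adds an empty cell at the bottom of the penultimate column's vertical extent and slides the bottommost empty cell inward by symplectic \emph{jeu de taquin}. Each slide is local to the two rightmost columns, so Proposition \ref{nestedcolumns} applies and shows that the right column of the new last column is contained in the right column of the previous one. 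Iterating until the last column has length $\ell'$, and using Corollary \ref{lastcolumn} to identify the final last column with $L_{\ell'}$, we obtain the required inclusion $L_{\ell'} r\subseteq L_\ell r$. Combined with (a), this shows $(T)K_+$ is a key tableau, and its shape is $\lambda$ by construction.

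The main obstacle is the nesting step. Proposition \ref{nestedcolumns} is phrased for two-column tableaux, so I must argue that the intermediate slides I perform can be viewed in a two-column window without disturbing the columns further to the left, and that the intermediate shapes remain legitimate KN skew tableaux so that Proposition \ref{nestedcolumns} is applicable. Identifying each intermediate last column with the canonical $L_{\ell''}$ (for the length $\ell''$ reached at that stage) requires Corollary \ref{lastcolumn} at every step; this also ensures that the chain of inclusions is a chain relating the canonical right columns, not merely right columns of some particular skew-shape presentations.
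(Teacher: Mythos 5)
Your proposal follows the same route the paper intends: well-definedness of each column via Proposition \ref{Fultonices} and Corollary \ref{lastcolumn}, the nesting of the right columns via iterated application of Proposition \ref{nestedcolumns}, and the absence of symmetric pairs directly from the definition of the right column $Cr$. This matches the argument the paper sketches in the paragraph preceding the theorem, so the proposal is correct and essentially identical in approach.
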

\begin{remark} Recall the set up of Proposition \ref{Fultonices}. If the shape of $S$, $\mu/\nu$, is such that every two consecutive columns have at least one cell in the same row, then each column of $S$ is a maximal column factor of the word $(S)cr$, hence $(S)cr$ is a frank word. Moreover, the columns of $S$ appear in reverse order in $(S)cr$. Therefore, given a KN tableau $T$, the columns of $(T)K_+$ consist of right columns of the first columns of the frank words associated to $T$.
\end{remark}
In the set up of Proposition \ref{nestedcolumns}, we also can prove that $ C_1\ell\subseteq  C_1'\ell$, hence the set of left columns of the first columns of all skew tableaux with the same number of columns of each length as $T$ will be nested.
The left key $(T)K_-$ is obtained after replacing each column of $T$ with a column of the same size taken from this set.
\begin{example}\label{ExRightkey}
	The tableau $T=\YT{0.15in}{}{
				{{1},{3},{\overline{1}}},
				{{3},{\overline{3}}},
				{{\overline{3}}}}$ has the following six KN skew tableaux with same number of columns of each length as $T$, each one corresponding to a permutation of its column lengths, and each one is associated to the frank word given by its column reading. 
			
			\begin{tikzpicture}
			\node at (0,0) {$\YT{0.15in}{}{
					{{1},{3},{\overline{1}}},
					{{3},{\overline{3}}},
					{{\overline{3}}}}$};	
			\node at (4,1) {$\begin{tikzpicture}[scale=.38, baseline={([yshift=-.8ex]current bounding box.center)}]
				\draw (0,0) rectangle +(1,1);
				\draw (0,1) rectangle +(1,1);
				\draw (0,2) rectangle +(1,1);
				\draw (1,2) rectangle +(1,1);
				\draw (2,2) rectangle +(1,1);
				\draw (2,3) rectangle +(1,1);
				\node at (.5,.5) {$\overline{3}$};
				\node at (.5,1.5) {$3$};
				\node at (.5,2.5) {$1$};
				\node at (1.5,2.5) {$\overline{3}$};
				\node at (2.5,2.5) {$\overline{1}$};
				\node at (2.5,3.5) {$3$};
				\end{tikzpicture}$};
			\node at (8,1) {$\begin{tikzpicture}[scale=.38, baseline={([yshift=-.8ex]current bounding box.center)}]
				\draw (0,0) rectangle +(1,1);
				\draw (1,0) rectangle +(1,1);
				\draw (1,1) rectangle +(1,1);
				\draw (1,2) rectangle +(1,1);
				\draw (2,1) rectangle +(1,1);
				\draw (2,2) rectangle +(1,1);
				\node at (.5,.5) {$2$};
				\node at (1.5,.5) {$\overline{2}$};
				\node at (1.5,1.5) {$\overline{3}$};
				\node at (1.5,2.5) {$1$};
				\node at (2.5,1.5) {$\overline{1}$};
				\node at (2.5,2.5) {$3$};
				\end{tikzpicture}$};
			\node at (4,-1) {$\begin{tikzpicture}[scale=.38, baseline={([yshift=-.8ex]current bounding box.center)}]
				\draw (0,0) rectangle +(1,1);
				\draw (0,1) rectangle +(1,1);
				\draw (1,0) rectangle +(1,1);
				\draw (1,1) rectangle +(1,1);
				\draw (1,2) rectangle +(1,1);
				\draw (2,2) rectangle +(1,1);
				\node at (.5,.5) {$2$};
				\node at (.5,1.5) {$1$};
				\node at (1.5,.5) {$\overline{2}$};
				\node at (1.5,1.5) {$\overline{3}$};
				\node at (1.5,2.5) {$3$};
				\node at (2.5,2.5) {$\overline{1}$};
				\end{tikzpicture}$};
			\node at (8,-1) {$$\begin{tikzpicture}[scale=.38, baseline={([yshift=-.8ex]current bounding box.center)}]
				\draw (0,0) rectangle +(1,1);
				\draw (0,1) rectangle +(1,1);
				\draw (1,1) rectangle +(1,1);
				\draw (2,1) rectangle +(1,1);
				\draw (2,2) rectangle +(1,1);
				\draw (2,3) rectangle +(1,1);
				\node at (.5,.5) {$2$};
				\node at (.5,1.5) {$1$};
				\node at (1.5,1.5) {$\overline{2}$};
				\node at (2.5,1.5) {$\overline{1}$};
				\node at (2.5,2.5) {$\overline{3}$};
				\node at (2.5,3.5) {$3$};
				\end{tikzpicture}$$};
			\node at (12,0) {$\SYT{0.15 in}{}{{{3}},{{\overline{3}},{1}},{{\overline{1}},{\overline{2}},{2}}}$};
			\draw [->] (1,.5) -- (3,1);
			\draw [->] (1,-.5) -- (3,-1);
			\draw [->] (5,1) -- (7,1);
			\draw [->] (5,-1) -- (7,-1);
			\draw [->] (9,1) -- (11,.5);
			\draw [->] (9,-1) -- (11,-.5);	
			\end{tikzpicture}
			\vspace{-.3cm}
				
The right key of $T$ has as columns $\YT{0.15in}{}{
			{{3}},
			{{\overline{3}}},
			{{\overline{1}}}}\!r$, $\YT{0.15in}{}{
			{{3}},
		{{\overline{1}}}}\!r$ and $\YT{0.15in}{}{
		{{\overline{1}}}}\!r$, hence $(T)K_+=\YT{0.15in}{}{
			{{3},{3},{\overline{1}}},
			{{\overline{2}},{\overline{1}}},
			{{\overline{1}}}}$.

The left key of
 $T$ has as columns
$\YT{0.15in}{}{
	{{1}},
	{{3}},
	{{\overline{3}}}}\!\ell$, $\YT{0.15in}{}{
	{{1}},
	{{2}}}\!\ell$ and $\YT{0.15in}{}{
	{{2}}}\!\ell$, hence $(T)K_-=\YT{0.15in}{}{
	{{1},{1},{2}},
	{{2},{2}},
	{{\overline{3}}}}$.
\end{example}
\subsection{Demazure crystals and right key tableaux}
Let $\lambda\in\Z^n$ be a partition and $v\in\lambda B_n$.
We define $(v)\mathfrak{U}=\{T\in \mathcal{KN}(\lambda,n)\mid (T)K_+=(v)K\}$ the set of KN tableaux of $B^\lambda$ with right key $(v)K$.
Given a subset $X$ of $\mathfrak{B}^\lambda$, consider the operator $\mathfrak{D}_i$ on $X$, $i\in [n]$, defined by
$X\mathfrak{D}_i=\{x\in \mathfrak{B}^\lambda\mid (x)e_i^k\in X \,\,\text{for some $k\geq 0$}\}$ \cite{BSch 17}.
If $v=\lambda \sigma $ where $\sigma =s_{i_1}\dots s_{i_{(\sigma)\ell}}\in B_n$ is a reduced word, we define the Demazure crystal to be
$\mathfrak{B}_v=\{(\lambda)K\}\mathfrak{D}_{i_1}\dots \mathfrak{D}_{i_{(\sigma)\ell}}$.
 This definition is independent of the reduced word for $\sigma$ \cite[Theorem 13.5]{BSch 17}, 
  as well of the coset representative of $W_\lambda\sigma$, that is, $\mathfrak{B}_{\lambda \sigma }=\mathfrak{B}_{\lambda \sigma_v }$. 
	From \cite[Proposition 2.5.1]{BB05}, if $\rho\leq\sigma$ in $B_n$ then $\rho_u=\sigma_u\leq\sigma_v$ where $u=\lambda\rho$. Since $(x)e_i^0=x$, if $\rho\leq\sigma$ then  $\mathfrak{B}_{\lambda \rho}=\mathfrak{B}_{\lambda \rho_u}\subseteq\mathfrak{B}_{\lambda \sigma_v}=\mathfrak{B}_{ v}$.
	Thus we define the Demazure crystal  atom $\hat{\mathfrak{B}}_{ v}$ to be
	$\hat{\mathfrak{B}}_{ v}=\hat{\mathfrak{B}}_{ \lambda \sigma}:=\mathfrak{B}_{\lambda \sigma_v }\setminus\bigcup\limits_{\rho_u<\sigma_v }\mathfrak{B}_{\lambda \rho_u}=\mathfrak{B}_{v }\setminus\bigcup\limits_{u<v }\mathfrak{B}_{u}\stackrel{\text{Thm \ref{keycoset}}}{=}\mathfrak{B}_{v }\setminus\bigcup\limits_{(u)K<(v)K }\mathfrak{B}_{u}.$
%
%
\begin{lemma}\label{fi}
	Let $\sigma=s_i$  be a generator of $B_n$ and $C$ an admissible column such that $(C)f_i\neq 0$. Then $(Cr)\text{\emph{wt}}=(((C)f_i)r)\text{\emph{wt}}$ or   $(Cr)\text{\emph{wt}}=((((C)f_i)r)\text{\emph{wt}})\sigma$.
\end{lemma}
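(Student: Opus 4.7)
The plan is to do a direct case analysis keyed by which entry of $C$ is modified by $f_i$. The crystal operator on a single column can be read off from the type $C$ signature rule: there is a unique letter $a$ of $C$ that gets replaced by a letter $b$, where $(a,b)\in\{(i,i+1),(\overline{i+1},\overline{i})\}$ if $i<n$, and $(a,b)=(n,\overline{n})$ if $i=n$. Setting $D=(C)f_i$, the raw weight relation is $(D)\mathrm{wt}=(C)\mathrm{wt}-\alpha_i$. What remains is to compare the splitting corrections.

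First I record a ledger for the splitting. Let $I=\{z_1>\dots>z_r\}$ and $J=\{t_1>\dots>t_r\}$ be the index sets used to produce $Cr$, and let $I'$, $J'$ be the corresponding sets for $D$. A direct entry-by-entry count gives the identity $(Cr)\mathrm{wt}=(C)\mathrm{wt}+\sum_{k}(\mathbf{e}_{z_k}-\mathbf{e}_{t_k})$, and likewise for $D$. Writing $\Delta=\bigl[\sum_k(\mathbf{e}_{z_k}-\mathbf{e}_{t_k})\bigr]-\bigl[\sum_k(\mathbf{e}_{z'_k}-\mathbf{e}_{t'_k})\bigr]$, the lemma reduces to showing that $\alpha_i+\Delta$ is either $0$ or equal to $(Dr)\mathrm{wt}\cdot s_i-(Dr)\mathrm{wt}$, which is a scalar multiple of $\alpha_i$ with coefficient determined by the $i$- and $(i+1)$-coordinates (resp.\ the $n$-coordinate) of $(Dr)\mathrm{wt}$.

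Next I split into two sub-cases. In the \emph{generic} sub-case, the letters $a$ and $b$ are untouched by the symmetric-pair structure in both $C$ and $D$: $a,b\notin I\cup J$ and $a,b\notin I'\cup J'$. Then by inspection $I=I'$, $J=J'$, so $\Delta=0$; the right columns differ only in the replacement $a\to b$, so $(Cr)\mathrm{wt}-(Dr)\mathrm{wt}=\alpha_i$. Checking coordinate-by-coordinate shows this equals $(Dr)\mathrm{wt}\cdot s_i-(Dr)\mathrm{wt}$, giving the second alternative. In the \emph{exceptional} sub-cases, the letter $a$ or $b$ is one of the $z_k$'s (i.e.\ a symmetric pair is created or destroyed), or the modification frees/locks a candidate $t_k$ in the greedy selection. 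Using the admissibility condition (1CC) and the strict monotonicity of the column, I argue that the greedy algorithm producing $J$ and $J'$ agrees outside a single index; the shift at that one index contributes exactly $-\alpha_i$ to $\Delta$, so $(Cr)\mathrm{wt}=(Dr)\mathrm{wt}$ and the first alternative holds. The case $i=n$ is handled separately but analogously: $f_n$ turns an $n$ into $\overline{n}$, and since $C$ is strictly increasing the change cannot simultaneously leave a surviving partner, so the only question is whether the modified $\overline{n}$ enters or leaves the set $J$.

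The main obstacle is the bookkeeping in the exceptional cases: one must verify that when $a\to b$ disturbs the admissibility window around the letters $i,i+1,\overline{i+1},\overline{i}$, the greedy choices of the $t_k$'s for $C$ and for $D$ differ in exactly one position, and by exactly the amount needed for the $\Delta$-contribution to cancel $\alpha_i$. This is where the ``gap-preserving'' nature of the Sheats split (each $t_k$ is the greatest admissible letter below $\min(t_{k-1},z_k)$) will be used in an essential way, and it is the only step that requires a careful local argument rather than a direct computation.
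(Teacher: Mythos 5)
The paper itself does not contain a proof of Lemma \ref{fi} (as with most statements in this extended abstract, the proof is deferred to the full version \cite{San 19}), so I can only assess your plan on its own terms. Your framework is the right one: $f_i$ changes exactly one letter of the column, the ledger $(Cr)\mathrm{wt}=(C)\mathrm{wt}+\sum_k(\mathbf{e}_{z_k}-\mathbf{e}_{t_k})$ is correct, and reducing the lemma to comparing the greedy selections for $C$ and $D=(C)f_i$ is how one would expect the full proof to go.

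However, the dichotomy you assert is false, and this is a genuine gap. Take $n=3$, $i=1$ and $C$ the admissible column with entries $1,3,\overline{3}$. Here $f_1$ replaces $1$ by $2$, which \emph{locks} the candidate $t_1=2$: one computes $Cr=(1,3,\overline{2})$ with weight $(1,-1,1)$ and $Dr=(2,3,\overline{1})$ with weight $(-1,1,1)$. This falls squarely into your ``exceptional'' branch (the greedy selection shifts at one index), yet $\Delta=+\alpha_1$ rather than $-\alpha_1$, the total difference is $2\alpha_1$, and it is the \emph{second} alternative $(Cr)\mathrm{wt}=((Dr)\mathrm{wt})s_1$ that holds; the equality $(Cr)\mathrm{wt}=(Dr)\mathrm{wt}$ your argument would produce is simply false here. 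The same failure occurs in the pair-creation flavour of your exceptional case: for $n=2$, the edge $(1,\overline{2})\xrightarrow{f_1}(2,\overline{2})$ gives $(Cr)\mathrm{wt}=(1,-1)$ and $(Dr)\mathrm{wt}=(-1,1)$, again the second alternative with difference $2\alpha_1$, whereas $(2,\overline{2})\xrightarrow{f_1}(2,\overline{1})$ does give the first alternative. So the exceptional branch must be split further according to the sign of the contribution of the shifted index, and in the $+\alpha_i$ sub-case one must verify that $(Dr)\mathrm{wt}$ has coordinates $(-1,+1)$ at positions $(i,i+1)$ (resp.\ $-1$ at position $n$) so that the $2\alpha_i$ difference is exactly the $s_i$-swap. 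Relatedly, even your ``generic'' step silently uses that $(Dr)\mathrm{wt}_{i+1}-(Dr)\mathrm{wt}_i=1$ to identify the difference $\alpha_i$ with $((Dr)\mathrm{wt})s_i-(Dr)\mathrm{wt}$; this holds there but is precisely the point that fails to propagate to the exceptional cases, so it needs to be made explicit. The remark following the lemma in the paper --- that the weight is preserved only when $Cr$ has equal weight at $i$ and $i+1$ or $(C)e_i\neq 0$ --- is a reliable guide to which sub-cases land in which alternative and should be used to organize the corrected case analysis.
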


In the previous lemma, all the cases in which the weight is preserved either have equal weight for $i$ and $i+1$ in $Cr$ or $(C)e_i\neq 0$.
Hence we have the following corollaries:

\begin{corollary}\label{onde está fi de T}
	Let $T\in\mathcal{KN}(\lambda, n)$ and $i\in [n]$. If
	$(T)K_+\!=\!(v)K$, for some $v\!=\!(v_1,\dots, v_n)\in\Z^n$, then $((T)f_i)K_+\!=\!(v)K \text{ or } ((T)f_i)K_+=(vs_i)K.$
	Moreover, $((T)f_i)K_+=(vs_i)K$ only if $v_i>v_{i+1}$ (in the usual ordering of real numbers) and $1\leq i<n$, or, $v_i>0$ and $i=n$. 
\end{corollary}

\begin{corollary}\label{ei}
	Let $\sigma=s_i$  be a generator of $B_n$ and $C$ an admissible column. Then $(Cr)\text{\emph{wt}}=(((C)e_i)r)\text{\emph{wt}}$ or $(Cr)\text{\emph{wt}}=((((C)e_i)r)\text{\emph{wt}})\sigma$.
\end{corollary}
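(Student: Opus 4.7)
The plan is to deduce Corollary~\ref{ei} directly from Lemma~\ref{fi} by exploiting that $e_i$ and $f_i$ are mutually inverse partial operators on the crystal of admissible columns, together with the fact that $s_i$ is an involution. We may assume $(C)e_i \neq 0$, since otherwise the statement is vacuous.

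Set $C' := (C)e_i$, so that $(C')f_i = C$ and $C'$ is again an admissible column (admissibility is preserved by the crystal raising/lowering operators on the subcrystal of admissible columns inside the KN crystal). Apply Lemma~\ref{fi} to $C'$ and the generator $\sigma = s_i$, using $(C')f_i = C$. This gives either
\[
(C'r)\text{wt} \;=\; (((C')f_i)r)\text{wt} \;=\; (Cr)\text{wt},
\]
or
\[
(C'r)\text{wt} \;=\; ((((C')f_i)r)\text{wt})\sigma \;=\; ((Cr)\text{wt})\sigma.
\]
The first alternative already reads $(Cr)\text{wt} = (((C)e_i)r)\text{wt}$. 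For the second, act on both sides by $\sigma = s_i$; since $s_i^2 = 1$, we obtain $(Cr)\text{wt} = ((((C)e_i)r)\text{wt})\sigma$, which is the other alternative claimed in the corollary. Either way, one of the two identities in the statement holds.

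The only nontrivial ingredient is the assertion that $(C)e_i$ remains admissible whenever it is nonzero, so that Lemma~\ref{fi} is legitimately applicable to $C'$; I expect this to be the only real check, and it follows from the description of the crystal structure on columns recalled in Section~\ref{SecRS} (equivalently, from the fact that the subset of admissible columns is a union of connected components of the crystal of length-$k$ strict words, since every admissible column occurs as the leftmost column of some KN tableau and $e_i$ acts componentwise on the tensor product). All remaining manipulations are the formal inversion argument above, so no further computation is needed.
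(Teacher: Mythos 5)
Your proposal is correct and matches the intended derivation: the paper presents Corollary~\ref{ei} as an immediate consequence of Lemma~\ref{fi} (``Hence we have the following corollaries''), obtained exactly as you do by setting $C'=(C)e_i$, using $(C')f_i=C$, and applying $s_i^2=1$ to pass between the two alternatives. Your side remark that admissibility is preserved by $e_i$ is the right thing to check and holds because the admissible columns of a fixed length form the crystal $\mathfrak{B}^{(1^k)}$, which is closed under the crystal operators.
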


\begin{lemma}\label{eicheck}
	Let $i\in \left[n\right]$ and $C$ be an admissible column. Then $(C)e_i\neq 0$ only if:
		
		1. $i<n$ and the weight of $i$ in $Cr$ is less than the weight of $i+1$ in $Cr$;
		
		2. $i=n$ and weight of $i$ is negative in $Cr$.
\end{lemma}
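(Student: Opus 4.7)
My plan is to argue by case analysis on $i$, splitting into $i = n$ and $i < n$, and then reading off the $e_i$-signature of $C$ viewed as a word.

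For $i = n$ the only letters contributing to the $e_n$-signature are $n$ and $\bar n$, and each occurs at most once in $C$ since columns are strictly increasing; the reading signature is therefore one of $\emptyset, +, -, +-$. Only the pattern $-$ has an unpaired $-$, so $(C)e_n \neq 0$ forces $n \notin C$ and $\bar n \in C$. The unbarred part of $Cr$ equals that of $C$, so $n \notin Cr$; every replacement index satisfies $t_k < z_k \le n$, so $n$ is never such an index and no replacement introduces $\bar n$; and since $n \notin C$ we have $n \notin I$, so the $\bar n$ of $C$ is preserved in $Cr$. Consequently the weight of $n$ in $Cr$ equals $-1$, which is negative as required.

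For $i < n$ the signature depends only on which of $i, i+1, \overline{i+1}, \bar i$ lie in $C$; each appears at most once, so in top-to-bottom order the signature is a sub-pattern of $+,-,+,-$ selected by a $4$-bit indicator of which of the four letters are present. I would enumerate the sixteen configurations, cancel $+-$ pairs, and keep the six in which an unpaired $-$ survives. For each such pattern one computes the weights of $i$ and of $i+1$ in $Cr$ using the rule: the weight of $j$ in $Cr$ is $+1$ if $j \in C$, is $-1$ if $j \notin C$ and either $\bar j \in C$ (so $\bar j$ is kept in $Cr$) or $j = t_k$ for some $k$ (so $\bar j$ appears as a replacement), and is $0$ otherwise.

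The delicate point is the sub-configurations in which the only unpaired $-$ comes from $\bar i$, because there the weight of $i+1$ in $Cr$ can be lowered by the replacement mechanism when $i+1 = t_k$ for some $k$ (which requires a pair $(z,\bar z) \in C$ with $z > i+1$). To handle these I would invoke Corollary~\ref{ei}, which asserts $(Cr)\text{wt} = ((Ce_i)r)\text{wt}$ or $(Cr)\text{wt}\cdot s_i = ((Ce_i)r)\text{wt}$, and combine it with $\text{wt}(Ce_i) = \text{wt}(C) + \alpha_i$ together with the fact that $\text{wt}(Cr)$ differs from $\text{wt}(C)$ by the total contribution of the replacement swaps $\bar z_k \mapsto \bar t_k$. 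Tracking how the replacement-index set can change when $e_i$ modifies $C$ then forces the required inequality between the $i$- and $(i+1)$-coordinates of $\text{wt}(Cr)$, with the $s_i$-alternative supplying the strict inequality and the equality alternative matching the situation pinpointed in the remark following Lemma~\ref{fi}.
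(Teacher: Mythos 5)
The extended abstract gives no proof of Lemma~\ref{eicheck} (it is deferred to \cite{San 19}), so I can only judge your plan on its own terms. Your $i=n$ case is complete and correct, and the signature-rule enumeration over which of $i$, $i+1$, $\overline{i+1}$, $\overline{i}$ occur in $C$ is certainly the right skeleton for $i<n$.

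The problem is that the ``delicate point'' you flag is not something you can argue past: in two of the six surviving configurations the strict inequality $\mathrm{wt}_i(Cr)<\mathrm{wt}_{i+1}(Cr)$ genuinely fails, so no combination of Corollary~\ref{ei} with weight bookkeeping will force it. Concretely, for $n=3$ take $C=\{3,\overline{3},\overline{1}\}$ and $i=1$: this column is admissible, $(C)e_1=\{3,\overline{3},\overline{2}\}\neq 0$, yet $Cr=\{3,\overline{2},\overline{1}\}$ assigns weight $-1$ to both $1$ and $2$ --- exactly your ``$\overline{i}$ alone'' case with $i+1=t_1$. Worse, take $C=\{2,3,\overline{2}\}$ and $i=2$: the signature is $+--$, so $(C)e_2=\{2,3,\overline{3}\}\neq 0$, yet $Cr=\{2,3,\overline{1}\}$ assigns weight $+1$ to both $2$ and $3$; this configuration ($i,i+1,\overline{i}\in C$, $\overline{i+1}\notin C$, where the replacement $\overline{z_k}\mapsto\overline{t_k}$ with $z_k=i$ raises $\mathrm{wt}_i(Cr)$ to $+1$) is one your sketch does not flag at all. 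What your enumeration actually establishes, once carried to completion, is the weak inequality $\mathrm{wt}_i(Cr)\leq\mathrm{wt}_{i+1}(Cr)$ --- every surviving pattern satisfies it, and its contrapositive ($\mathrm{wt}_i(Cr)>\mathrm{wt}_{i+1}(Cr)$ forces $(C)e_i=0$) is what the proof of Theorem~\ref{DemKey} really uses. So you must either explain why those two configurations cannot occur for the columns to which the lemma is applied (they do occur for general admissible columns) or finish the six cases and state the conclusion with $\leq$ in place of $<$. Independently of this, leaning on Corollary~\ref{ei} is uncomfortably circular: in the paper's logical order that corollary and the present lemma are siblings extracted from the case analysis behind Lemma~\ref{fi}, not consequences of one another.
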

%
Thanks to the right key map, Theorem \ref{rightkeymap}, we now describe the Demazure  crystal atom in type $C$.
Lascoux and Sch\"{u}tzenberger,  \cite[Theorem 3.8]{LasSchu 90}, have given the type $A$ version.
\begin{theorem}[Main Theorem]\label{DemKey}
	Let $v\in \lambda B_n$.
	Then $(v)\mathfrak{U}=\hat{\mathfrak{B}}_{v}$.
\end{theorem}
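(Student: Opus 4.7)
The plan is to establish the stronger decomposition
$\mathfrak{B}_v = \biguplus_{u\leq v}(u)\mathfrak{U}$,
from which the theorem is immediate: using $\mathfrak{B}_u\subseteq\mathfrak{B}_v$ whenever $u\leq v$ (recalled in the paragraph before the statement), one gets $\bigcup_{u<v}\mathfrak{B}_u = \biguplus_{w<v}(w)\mathfrak{U}$, and subtracting this from the decomposition yields $\hat{\mathfrak{B}}_v=(v)\mathfrak{U}$. Disjointness of the union on the right is automatic since each $T$ has a single right key.

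I would argue the stronger statement by induction on $\ell(\sigma_v)$. The base case $\sigma_v=e$, $v=\lambda$, amounts to $\mathfrak{B}_\lambda=(\lambda)\mathfrak{U}=\{(\lambda)K\}$: the Demazure crystal at the identity is just the highest-weight tableau, $(\lambda)K$ is already a key and so equal to its own right key, and no other tableau of shape $\lambda$ can have right key $(\lambda)K$ by the description of $K_+$ via right columns of frank-word skew shapes, together with Proposition~\ref{UniKw}.

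For the inductive step, fix a reduced factorisation $\sigma_v=\sigma_{u'}s_i$ with $u'<v$ and $v=u's_i$, so that $\mathfrak{B}_v=\mathfrak{B}_{u'}\mathfrak{D}_i$ and, by induction, $\mathfrak{B}_{u'}=\biguplus_{u\leq u'}(u)\mathfrak{U}$. The heart of the argument is to track the right key along $f_i$- and $e_i$-strings. In the forward direction, the corollary describing $((T)f_i)K_+$ following Lemma~\ref{fi} says that for $T\in(u)\mathfrak{U}$ the right key of $(T)f_i$ is either $(u)K$ or $(us_i)K$, the latter arising only when $u$ has an $s_i$-descent on the key side, which by Theorem~\ref{keycoset} is precisely $us_i>u$ in Bruhat order on $\lambda B_n$. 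In the reverse direction, given $T$ with $(T)K_+=(w)K$ and $w\leq v$, iteratively applying $e_i$ produces a tableau $T'$ whose right key is $\leq u'$ entrywise — this uses Corollary~\ref{ei} and Lemma~\ref{eicheck} to control how the right key weight moves along the $e_i$-string — so $T'\in\mathfrak{B}_{u'}$ by induction, and then $T\in\mathfrak{B}_{u'}\mathfrak{D}_i=\mathfrak{B}_v$. Matching the two indexing sets uses the standard lifting property of Bruhat order, $\{u\leq v\}=\{u\leq u'\}\cup\{us_i:u\leq u',\ us_i>u\}$.

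The main obstacle I anticipate is the sharper \emph{produces} half of the forward step: showing that every $T$ with $(T)K_+=(us_i)K$ where $u\leq u'$ and $us_i>u$ is genuinely hit by $f_i$ applied to some element of $(u)\mathfrak{U}$. Producing an explicit preimage under $e_i$ — rather than merely knowing that the right key is reachable — is where I expect to need the nesting property of right columns from Proposition~\ref{nestedcolumns} together with the column-by-column description of $K_+$ via frank words.
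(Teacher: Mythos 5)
Your proposal is correct and takes essentially the same route as the paper: induction on the length of the minimal coset representative, with the inductive step carried by tracking the right key along $f_i$- and $e_i$-strings via the corollary following Lemma~\ref{fi}, Corollary~\ref{ei} and Lemma~\ref{eicheck}; your decomposition $\mathfrak{B}_v=\biguplus_{u\leq v}(u)\mathfrak{U}$ is just a repackaging of the paper's atom-by-atom statement. The ``main obstacle'' you flag is exactly the point the paper settles by showing that for any $T\in(v)\mathfrak{U}\cup(vs_i)\mathfrak{U}$ the top of its $e_i$-string, $(T)e_i^{(T)\varepsilon_i}$, lies in $(v)\mathfrak{U}$, using Lemma~\ref{eicheck} applied to the right column of the first column of a frank word for $T$.
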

\begin{proof} Let $\rho$ be a minimal length coset representative modulo $W_\lambda$ such that $v=\lambda\rho$. We will proceed by induction on $(\rho)\ell$. 
	If $(\rho)\ell=0$ then $\rho=id$ and $v=\lambda$. In this case we have that $\widehat{\mathfrak{B}}_\lambda=\{(\lambda)K\}=(\lambda)\mathfrak{U}$.
	
	Let $\rho\geq 0$. Consider $\sigma=s_i$ a generator of $B_n$ such that $\rho\sigma>\rho$ and $\lambda\rho\sigma\neq\lambda\rho=v$, i.e., $\rho\sigma\rho^{-1}\notin W_\lambda$.
	Recall $e_i$, $\varepsilon_i$,  $f_i$ and $\phi_i$ from the definition of the crystal $\mathfrak{B}^\lambda$.
	If $T\in \hat{\mathfrak{B}}_{\lambda\rho\sigma}$ then $T$ is obtained after applying $f_i$ (maybe more than once) to a tableau in $\hat{\mathfrak{B}}_{\lambda\rho}$, which by inductive hypothesis exists in $(v)\mathfrak{U}$. By Corollary \ref{onde está fi de T}, if $(T)f_i\notin(v)\mathfrak{U}$ then $(T)f_i\in (v\sigma)\mathfrak{U}$. So it is enough to prove that given a tableau $T\in (v)\mathfrak{U}\cup (v\sigma)\mathfrak{U}$ then $(T)e_i^{(T)\varepsilon_i}\in (v)\mathfrak{U}$. 
	
	We have two different cases to consider: $i=n$ and $i<n$. 
	


	If $T\in (v\sigma)\mathfrak{U}$ then, if $i<n$, there exists a frank word of $T$ such that, if $V_1$ is its first column then $V_1r$ has less weight for $i$ than for $i+1$ (less in the usual ordering of real numbers); if $i=n$, there is a frank word of $T$ such that, if $V_1$ is its first column then $V_1r$ has negative weight for $i$. Since we are in the column $V_1r$, if $i<n$, $i$ and $i+1$ can have weights $(0,1)$, $(-1,1)$ or $(-1,0)$ and if $i=n$ then $i$ has weight $-1$. Note that these are the exact conditions of Lemma \ref{eicheck}. In either case, due to Lemma \ref{eicheck}, we can applying $e_i$ enough times to the frank word associated until this no longer happens. This is true because we only need to look to $V_1$ to see if it changes after applying $e_i$ enough times to the frank word.
	In the signature rule we have that successive applications of $e_i$ changes the letters of a word from the end to the beginning, so, from the remark after Lemma \ref{fi}, the number of times that we need to apply $e_i$, in order to conditions of Lemma \ref{eicheck} do not hold for the first column, is $(T)\varepsilon_i$. So $\left((T)e_i^{(T)\varepsilon_i}\right)K_+\neq (v\sigma)K$, hence, from Corollary \ref{ei}, we have that $(T)e_i^{(T)\varepsilon_i}\in(v)\mathfrak{U}$.

	If $T\in (v)\mathfrak{U}$ then $(T)e_i^{(T)\varepsilon_i}\in(v)\mathfrak{U}$ because if not, $(T)e_i^{(T)\varepsilon_i}$ will be in a Demazure crystal associated to $\rho'\in B_n$, with $\rho'<\rho$ such that $\rho'\sigma=\rho$. This cannot happen because in this case $\rho'=\rho\sigma<\rho$, which is a contradiction.
\end{proof}
\begin{example}\label{cristal21}
Consider the crystal graph of $\mathfrak{B}^{(2,1)}$:
	\begin{multicols}{2}
		${
			\begin{tikzpicture}
			[scale=.55,auto=left]
			\node (n0) at (0,14.5) {$\YT{0.17 in}{}{{{1},{1}},{{2}}}$};
			\node (n1l) at (-3.5,13.5)  {$\YT{0.17 in}{}{{{1},{2}},{{2}}}$};
			\node (n1r) at (3.5,13.5)  {$\YT{0.17 in}{}{{{1},{1}},{{\overline{2}}}}$};
			\node (n2l) at (-3.5,11)  {$\YT{0.17 in}{}{{{1},{\overline{2}}},{{2}}}$};
			\node (n2r) at (3.5,11) {$\YT{0.17 in}{}{{{1},{2}},{{\overline{2}}}}$};
			\node (n3r) at (3.5,8.5)  {$\YT{0.17 in}{}{{{2},{2}},{{\overline{2}}}}$};
			\node (n4rr) at (5,6)  {$\YT{0.17 in}{}{{{2},{2}},{{\overline{1}}}}$};
			\node (n4r) at (2,6)  {$\YT{0.17 in}{}{{{2},{\overline{2}}},{{\overline{2}}}}$};
			\node (n5r) at (3.5,3.5)  {$\YT{0.17 in}{}{{{2},{\overline{2}}},{{\overline{1}}}}$};
			\node (n6r) at (3.5,1)  {$\YT{0.17 in}{}{{{\overline{2}},{\overline{2}}},{{\overline{1}}}}$};
			\node (n3ll) at (-5,8.5)  {$\YT{0.17 in}{}{{{1},{\overline{2}}},{{\overline{2}}}}$};
			\node (n4l) at (-3.5,6)  {$\YT{0.17 in}{}{{{1},{\overline{1}}},{{\overline{2}}}}$};
			\node (n3l) at (-2,8.5)  {$\YT{0.17 in}{}{{{1},{\overline{1}}},{{2}}}$};
			\node (n5l) at (-3.5,3.5)  {$\YT{0.17 in}{}{{{2},{\overline{1}}},{{\overline{2}}}}$};
			\node (n6l) at (-3.5,1)  {$\YT{0.17 in}{}{{{2},{\overline{1}}},{{\overline{1}}}}$};
			\node (n7) at (0,0)  {$\YT{0.17 in}{}{{{\overline{2}},{\overline{1}}},{{\overline{1}}}}$};
			
			\draw (0,12.5)--(-3.5,15.5);
			\draw (0,12.5)--(3.5,15.5);
			\draw (0,12.5)--(5,3.5);
			\draw (0,12.5)--(-5,7);
			\draw (0,12.5)--(-2,-.5);
			\draw (0,12.5)--(2,-.5);
			\draw (0,12.5)--(-5,12.5);
			\draw (0,12.5)--(5,12.5);
			\draw[->] [draw=red,  thick] (-2.5,0.7)--(-1,0.3);
			\draw[->] [draw=blue,  thick] (2.5,0.7)--(1,0.3);
			\draw[->] [draw=blue,  thick] (-3.5,2.5)--(-3.5,2);
			\draw[->] [draw=red,  thick] (3.5,2.5)--(3.5,2);
			\draw[->] [draw=blue,  thick] (-3.5,5)--(-3.5,4.5);
			\draw[->] [draw=blue,  thick] (2.5,5)--(3,4.5);
			\draw[->] [draw=red,  thick] (4.5,5)--(4,4.5);
			\draw[->] [draw=red,  thick] (-2.5,7.5)--(-3,7);
			\draw[->] [draw=blue,  thick] (-4.5,7.5)--(-4,7);
			\draw[<-] [draw=blue,  thick] (4.5,7)--(4,7.5);
			\draw[<-] [draw=red,  thick] (2.5,7)--(3,7.5);
			\draw[->] [draw=blue,  thick] (-3,10)--(-2.5,9.5);
			\draw[->] [draw=red,  thick] (-4,10)--(-4.5,9.5);
			\draw[->] [draw=blue,  thick] (3.5,10)--(3.5,9.5);
			\draw[->] [draw=red,  thick] (-3.5,12.5)--(-3.5,12);
			\draw[->] [draw=blue,  thick] (3.5,12.5)--(3.5,12);
			\draw[->] [draw=blue,  thick] (-1,14.2)--(-2.5,13.8);
			\draw[->] [draw=red,  thick] (1,14.2)--(2.5,13.8);
			\end{tikzpicture}
		}$
The crystal is split into pieces. Each piece is a Demazure atom and contains exactly one symplectic key tableau, so we can identify each part with the weight of that key tableau, a vector in the $B_2$-orbit of $(2,1)$. From the previous theorem we have that all tableaux in the same piece have the same right key.\linebreak
One can check that $((1,\overline{2}))\mathfrak{U}=$ $\left\{\YT{0.15 in}{}{{{1},{\overline{2}}},{{2}}}, \YT{0.15 in}{}{{{1},{\overline{2}}},{{\overline{2}}}}\right\}=\hat{\mathfrak{B}}_{\lambda s_1s_2}$, for example.
\end{multicols}
\end{example}
Given $v\in \lambda B_n$ define the Demazure character (or key polynomial), $\kappa_v$, and the Demazure atom in type $C$, $\widehat{\kappa}_v$, as the generating functions of the KN tableau weights in $\mathfrak{B}_v$ and $\widehat{\mathfrak{B}}_v$, respectively:  $\kappa_v=\sum\limits_{T\in \mathfrak{B}_{v}}x^{(T)\text{wt}},\,
\hat{\kappa}_v=\sum\limits_{T\in \hat{\mathfrak{B}}_{v}}x^{(T)\text{wt}}$. Theorem \ref{DemKey} detects the KN tableaux in $\mathfrak{B}^\lambda$ contributing to the Demazure atom $\hat{\kappa}_v=\sum\limits_{\substack{(T)K_+=(v)K\\
		T\in \mathfrak{B}^\lambda }}x^{(T)\text{wt}}$.
Moreover, one has
		$\kappa_v=\sum\limits_{u\leq v}\hat{\kappa}_u=\sum\limits_{\substack{u\leq v\\T\in (u)\mathfrak{U}}}x^{(T)\text{wt}}=\sum\limits_{\substack{(u)K\leq (v)K\\T\in (u)\mathfrak{U}}}x^{(T)\text{wt}}=\sum\limits_{(T)K_+\leq (v)K}x^{(T)\text{wt}}$.
In Example \ref{cristal21}, $\mathfrak{B}_{(1,\overline{2})}=\{T\in\mathfrak{B}^\lambda\mid (T)K_+\leq ((1,\overline{2}))K\}=\left\{\YT{0.15 in}{}{{{1},{1}},{{2}}}, \YT{0.15 in}{}{{{1},{2}},{{2}}}, \YT{0.15 in}{}{{{1},{1}},{{\overline{2}}}}, \YT{0.15 in}{}{{{1},{\overline{2}}},{{2}}}, \YT{0.15 in}{}{{{1},{\overline{2}}},{{\overline{2}}}}\right\}$.
\section{Lusztig involution, right and left keys}\label{SecLusztig}
Let $\mathfrak{B}^\lambda$ be the crystal of tableaux in $\mathcal{KN}(\lambda, n)$.
	The type $C_n$ Lusztig involution $L:\mathfrak{B}^\lambda\rightarrow\mathfrak{B}^\lambda$ is the only involution such that, for all $i\in [n]$, $x\in \mathcal{KN}(\lambda, n)$:
(1) $((x)L)\text{wt}=((x)\text{wt})\omega_0=-(T)\text{wt}$, where $\omega_0$ is the longest element of $B_n$;
	(2) $((x)L)e_i=((x)f_{i})L$ and $((x)L)f_i=((x)e_{i})L$; 
	and (3) $((x)L)\varepsilon_i=(x)\varphi_{i}$ and $((x)L)\varphi_i=(x)\varepsilon_{i}$.

The involution $L$ flips the crystal upside down.
 The Schützenberger evacuation is a realization of Lusztig involution in type $A$. The algorithm below adapts it to KN tableaux. It sends a tableau $T\in \mathcal{KN}(\lambda,n)$ to $T^{\text{Ev}}\in \mathcal{KN}(\lambda,n)$, where $ (T)\text{wt}=-((T^{\text{Ev}})\text{wt})$.
\begin{algorithm}\label{rsi}\indent
		1. Let $(T)cr^\star$ be the word obtained by applying $\omega_0$ to the letters of $(T)cr$ and writing it backwards (or define $T^\#$ by $\pi$-rotating $T$ and applying $\omega_0$ to its entries).
		
		2. Insert $(T)cr^\star$ (or rectify $T^\#$). Define $T^{Ev}:=((T)cr^\star)P=$rectification of $T^\#$.
\end{algorithm}
Consider the KN tableau $T=\YT{0.15in}{}{
			{{1},{\overline{2}}},
			{{\overline{3}},{\overline{1}}},
			{{\overline{2}}}}$. 
		Then, $w=(T)cr=\overline{2}\overline{1}1\overline{3}\overline{2}$ and $w^\star=23\overline{1}12$.
		Then
		$(w^\star)P=\YT{0.15in}{}{
			{{2},{2}},
			{{3},{3}},
			{{\overline{3}}}}$ which is the rectification of $T^\#=\SYT{0.15in}{}{
			{{2}},
			{{3},{1}},
			{{\overline{1}},{2}}}$. 
Let $w\in [\pm n]$. The connected component of the crystal $G_n$ that contains the word $w^\star $ is obtained applying $^\star$ to each vertex of the one containing $w$ and reverting arrows. They are  isomorphic  because they have the same highest weight, say $\lambda$.
Therefore $(w)P$, $(w^\star)P\in \mathfrak{B}^\lambda$ and have symmetric weights. $T^{Ev}$ is the only KN tableau with the same shape as $T$, and Knuth equivalent to $(T)cr^\star$.
The algorithm  is a realization of the type $C$ Lusztig involution.
It follows that  evacuation of the right key of a tableau is the left key of the evacuation of the same tableau. See \cite{San 19} for the proof.
\begin{theorem}\label{key&Lusz}
	Let T be a KN tableau and $^{Ev}$ the type $C$ Lusztig (Schützenberger) involution. Then
	$$(T)K_+\,^{Ev}=(T^{Ev})K_-.$$
\end{theorem}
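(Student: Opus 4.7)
The plan is to prove the identity by showing that $(T^{Ev})K_-$ is obtained from $(T)K_+$ by applying, column by column, the star operation $C\mapsto C^\star$ (reverse the entries and bar each letter), and then to recognize this column-wise star of a key tableau as its evacuation. First I would record the column-level identity
$$(C^\star)\ell = (Cr)^\star \qquad\text{and}\qquad (C^\star)r = (C\ell)^\star$$
for every admissible column $C$, which follows directly from the split construction: the set $I=\{z_1>\cdots>z_r\}$ of unbarred letters with both $z_i,\overline{z_i}\in C$ is unchanged under $\star$, and the conditions defining $J=\{t_1>\cdots>t_r\}$ depend only on which pairs $\{z,\overline{z}\}$ are absent from $C$ and so are also unchanged; since the right (resp.\ left) split replaces $\overline{z_i}$ by $\overline{t_i}$ (resp.\ $z_i$ by $t_i$), the $\star$-involution interchanges the two.

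Next I would verify that the column-wise star of a key tableau $(v)K$ equals its evacuation $((v)K)^{Ev}=(-v)K$. Column-wise star preserves the nesting of columns (set inclusion commutes with bar), preserves the property that no pair $\{i,\overline{i}\}$ appears in any column, and negates the weight; by Proposition~\ref{UniKw} the resulting tableau must be $(-v)K$. On the other hand, from the defining properties of the Lusztig involution recalled before Algorithm~1, $L=(\cdot)^{Ev}$ sends extremal vertices of $\mathfrak{B}^\lambda$ to extremal vertices of opposite weight; in particular $((v)K)^{Ev}$ is a key of weight $-v$, so it equals $(-v)K$ as well. Thus column-wise star agrees with evacuation on all key tableaux.

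The main step is then to identify the columns of $(T^{Ev})K_-$ with the column-wise stars of those of $(T)K_+$, via frank words. If $w=C_1C_2\cdots C_m$ is a frank word for $T$, then $w^\star=C_m^\star C_{m-1}^\star\cdots C_1^\star$ has the same multiset of column-factor lengths; moreover $w^\star$ is Knuth-equivalent to $(T)cr^\star$, since Algorithm~1 realizes the Lusztig involution on $\mathfrak{B}^\lambda$ and $\star$ therefore respects Knuth equivalence in type $C$. So $w^\star$ inserts to $T^{Ev}$ and is a frank word for $T^{Ev}$, and the assignment $w\mapsto w^\star$ bijects frank words for $T$ with those for $T^{Ev}$, exchanging last and first column factors. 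By Theorem~\ref{rightkeymap} together with Corollary~\ref{lastcolumn}, the column of $(T)K_+$ at a position of length $h$ is $Cr$, where $C$ is the canonical length-$h$ last column of a frank word for $T$. By the first-column description of the left key given at the end of Section~\ref{SecKeys}, the column of $(T^{Ev})K_-$ at a position of length $h$ is $D\ell$, where $D$ is the canonical length-$h$ first column of a frank word for $T^{Ev}$. The star bijection gives $D=C^\star$, and the split identity then yields $D\ell=(C^\star)\ell=(Cr)^\star$. Hence $(T^{Ev})K_-$ is obtained from $(T)K_+$ by column-wise $\star$, which by the previous paragraph equals $((T)K_+)^{Ev}$.

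The main obstacle is the claim that $w\mapsto w^\star$ descends to the type $C$ plactic monoid and thereby sends frank words for $T$ to frank words for $T^{Ev}$. In type $A$ this is classical Sch\"utzenberger combinatorics; in type $C$ it rests on the interpretation of $\star$ as the Lusztig involution on $\mathfrak{B}^\lambda$, which is precisely what Algorithm~1 accomplishes, together with the compatibility of Baker--Lecouvey insertion with coplactic equivalence. A secondary technicality is the well-definedness of the canonical length-$h$ first column used in the left key, which requires the first-column analogue of Corollary~\ref{lastcolumn}; once this is granted, the argument above is a clean combination of the star bijection on frank words and the column-split identity.
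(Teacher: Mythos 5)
Your argument is correct and follows the route the paper itself sets up (the detailed proof is deferred to \cite{San 19}, but the ingredients assembled in Sections \ref{SecKeys} and \ref{SecLusztig} are exactly the ones you use): the split-column identities $(C^\star)\ell=(Cr)^\star$ and $(C^\star)r=(C\ell)^\star$, the fact that $w\mapsto w^\star$ respects type $C$ Knuth equivalence and exchanges the first and last maximal column factors of frank words, and the identification of column-wise $\star$ with evacuation on key tableaux via Proposition \ref{UniKw}. The only step to make fully explicit is the first-column analogue of Corollary \ref{lastcolumn} (already implicit in the paper's definition of $(T)K_-$), which you correctly flag as the remaining technicality.
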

\section{Acknowledgements}
I am grateful to C. Lenart, for his observations on the alcove path model, and to O. Azenhas, my Ph.D. advisor, for her help on the preparation of this paper.
\vspace{-.3cm}
\footnotesize
\bibliographystyle{amsplain}
\providecommand{\bysame}{\leavevmode\hbox to3em{\hrulefill}\thinspace}

\end{document}